\theoremstyle{definition}
\theoremstyle{definition}
\newtheorem{definition}{Definition}[section]
\theoremstyle{definition}
\theoremstyle{plain}
\newtheorem{theorem}[definition]{Theorem}
\newtheorem{lemma}[definition]{Lemma}
\theoremstyle{remark}
\newtheorem{remark}{Remark}
\def\today{\number\day\space\ifcase\month\or   January\or February\or
	March\or April\or May\or June\or   July\or August\or September\or
	October\or November\or December\fi\   \number\year}
\begin{document}

\title{Zero product and zero Jordan product determined Munn algebras}
\date{\today}

\author{Bo Yu}
\address{Bo Yu, 
	 School of Mathematics, East China University of Science and Technology\\
	 Shanghai 200237, P. R. China\\	} \email{modeace@163.com}

\author{Kaijia Luo}
\address{Kaijia Luo, 
	School of Mathematics, East China University of Science and Technology\\
	Shanghai 200237, P. R. China\\	}
\email{kaijia\_luo@163.com}

\author{Jiankui Li}
\address{Jiankui Li, 
School of Mathematics, East China University of Science and Technology\\
Shanghai 200237, P. R. China\\}
\email{jkli@ecust.edu.cn}

\thanks{This research was partly supported by the National Natural Science Foundation of China (Grant No.
	11871021).
}

\subjclass[2020]
{Primary 
16S50.  
Secondary 
15A30, 
}
\keywords{Munn algebra; idempotent; commutator; zero product determined algebra.
}
\date{\today}

\begin{abstract} Let  $\mathfrak{M}(\mathbb{D}, m, n, P)$ be  the ring of all $m \times n$ matrices over a division ring $\mathbb{D}$, with the product given by  $A \bullet B=A P B$, where $P$ is a fixed $n \times m$ matrix over $\mathbb{D}$. 
	When $2\leq m, n <\infty$ and $\operatorname{rank} P \geq 2$, we demonstrate that
	every element in  $\mathcal{A}=\mathfrak{M}(\mathbb{D}, m, n, P)$ is
	a sum of finite products of pairs of commutators. We also estimate the minimal number $N$ such that  $\mathcal{A}= \sum^N [\mathcal{A}, \mathcal{A}][\mathcal{A}, \mathcal{A}]$. 
	Furthermore, if $\operatorname{char}\mathbb{D}\neq 2$, we prove that $\mathfrak{M}(\mathbb{D}, m, n, P)$  is additively spanned by Jordan products of idempotents.
	For a field  $\mathbb{F}$  with $\operatorname{char}\mathbb{F}\neq 2, 3$, we show that the Munn algebra $\mathfrak{M}(\mathbb{F}, m, n, P)$ is   zero product determined and  zero Jordan product determined.
\end{abstract}

\maketitle

\section{Preliminaries}

Building on the idea of Rees \cite{MR2893} who presented
0-simple semigroups by matrices over groups,   Munn introduced \cite{MR66355}  Munn algebras  in 1955 for
the finite dimensional case.  In this article,  we  generalize some results in matrix algebras to that in Munn algebras.

Let $\mathbb{R}$ be a ring. Recall that an element $e \in \mathbb{R}$ is called an \textit{idempotent} if $e^2=e$. 
The \textit{commutator} of two elements $x, y \in \mathbb{R}$ is defined as $[x, y] := xy - yx$. 
In Section 2, we show some results in  matrix algebras that $\mathrm{M}_n (\mathbb{R})$ can be generated by idempotents or commutators. Then we investigate  the generalization of these results to some Munn algebras.

In 2009, Bre\v{s}ar,    Gra\v{s}i\v{c}, and  Ortega \cite{MR2490691} introduced the concept of zero product determined algebras.  
The property of zero product determinacy is widely used to explore   derivations, homomorphisms, and preserver problems. 
Bre\v{s}ar summarized the relevant research  of the past 15 years   in \cite{MR4311188}. 
In Section 3, we  generalize the results of the matrix algebra $\mathrm{M}_n (\mathbb{R})$ being zero product determined and zero Jordan product determined to some Munn algebras.

\begin{definition}
	Let $\mathbb{R}$ be a ring with identity, $I$ and $\Lambda$ be nonempty sets, and $P=(P_{mi})_{m \in \Lambda, i \in I}$ be a $\Lambda \times I$ matrix over $\mathbb{R}$. We denote by $\mathfrak{M}(\mathbb{R}, I, \Lambda, P)$ the set of all $I \times \Lambda$-matrices 
	over $\mathbb{R}$ that have a finite number of nonzero entries.

	In the set $\mathfrak{M}(\mathbb{R}, I, \Lambda, P)$,
	we define the addition as the usual point-wise addition and the multiplication $\bullet$ with $A \bullet B=A P B$,
	where the multiplication on the right-hand side is the usual multiplication of matrices. With
	these operations, $\mathfrak{M}(\mathbb{R} , I, \Lambda, P)$ is a ring. We call $\mathfrak{M}(\mathbb{R} , I, \Lambda, P)$ a \textit{Rees matrix ring over $\mathbb{R}$ with sandwich matrix $P$}. 
\end{definition}

\begin{definition}
	Let $\mathbb{R}$ be an algebra over a field $\mathbb{F}$, $I$ and $\Lambda$ be nonempty sets, $P=(P_{mi})_{m \in \Lambda, i \in I }$ be a $\Lambda \times I$ matrix over $R$.  In  $\mathfrak{M}(\mathbb{R}, I, \Lambda, P)$,
	we define modular multiplication as the usual point-wise modular multiplication. Then $\mathfrak{M}(\mathbb{R}, I, \Lambda, P)$ is an $\mathbb{F}$-algebra. We call $\mathfrak{M}(\mathbb{R} , I, \Lambda, P)$  a \textit{Munn algebra over $\mathbb{R}$ with sandwich matrix $P$}. 
\end{definition}

For any $g \in \mathbb{R}$,  $i \in I$ and $s \in \Lambda$, 
let  $(g, i, s)$ denote the matrix in $\mathfrak{M}(\mathbb{R} , I, \Lambda, P)$ with entry $g$ on $(i,s)$ and  0 otherwise. 
Thus we have
$$
(g, i, s)(h, j, t)=(g p_{s j} h, i, t) \quad \text { for any } g, h \in \mathbb{R}, i, j \in I, s, t \in \Lambda.
$$

When studying certain  algebraic or ring properties of $\mathfrak{M}(\mathbb{D}, m, n, P)$, by Theorem \ref{th4.2} and Lemma \ref{lemma1.4}, we can consider $P$ as $E_r^{n, m}$.

\begin{theorem}\cite[VII. Theorem 2.6]{MR600654}\label{th4.2}
	If $P$ is an $n \times m$ matrix over a division ring $\mathbb{D}$ with $\operatorname{rank} P= r>0$, then $P$ is equivalent to $E_r^{n, m}$, 
	where $E_r^{n, m}=\left(\begin{array}{ll}I_{r} & 0 \\ 0 & 0\end{array}\right),$ and $I_{r}$
	is the $r \times r$ identity matrix.
\end{theorem}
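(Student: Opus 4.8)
The plan is to establish this rank normal form by Gaussian elimination over $\mathbb{D}$, exploiting the fact that every nonzero element of a division ring is invertible. Here, $P$ being \emph{equivalent} to $E_r^{n,m}$ means that there are invertible matrices $U \in \GL_n(\mathbb{D})$ and $V \in \GL_m(\mathbb{D})$ with $U P V = E_r^{n,m}$. Since each elementary row operation (interchanging two rows, left-multiplying a row by a nonzero scalar, or adding a left-multiple of one row to another) amounts to left multiplication by an invertible elementary matrix, and each elementary column operation amounts to right multiplication by an invertible elementary matrix, it suffices to reduce $P$ to $E_r^{n,m}$ by finitely many such operations.

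I would induct on $\min(n,m)$. Because $r>0$, the matrix $P$ has a nonzero entry; interchanging rows and columns moves it to the $(1,1)$ position, and left-multiplying the first row by its inverse turns the pivot into $1$. Using this pivot, I would then clear the remainder of the first column by adding suitable left-multiples of the first row to the other rows, and clear the remainder of the first row by adding suitable right-multiples of the first column to the other columns. The result has the block form $\begin{psmallmatrix} 1 & 0 \\ 0 & P' \end{psmallmatrix}$, with $P'$ an $(n-1)\times(m-1)$ matrix over $\mathbb{D}$.

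Applying the inductive conclusion to $P'$---transforming it into $E_{r'}^{n-1,m-1}$, where $r'=\operatorname{rank}P'$, by operations confined to the last $n-1$ rows and $m-1$ columns---carries the full matrix to $E_{r'+1}^{n,m}$. To finish, I would identify $r'+1$ with $r$ via the invariance of rank under equivalence: left and right multiplication by invertible matrices preserve the dimension of the row space as a right $\mathbb{D}$-space and of the column space as a left $\mathbb{D}$-space, and over a division ring these two dimensions agree and equal the rank. Since $\operatorname{rank}E_{r'+1}^{n,m}=r'+1$ by inspection, we conclude $r=r'+1$, and the induction closes.

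The principal obstacle is the non-commutativity of $\mathbb{D}$: one must scrupulously use left multiplication for row operations and right multiplication for column operations, so that the elementary matrices produced are genuinely two-sided invertible, and one must work with the notion of rank that is stable under one-sided scalar multiplication. Granting the equality of row rank and column rank over a division ring, the elimination then proceeds exactly as over a field.
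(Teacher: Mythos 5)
The paper gives no proof of this statement---it is imported verbatim from Hungerford \cite[VII. Theorem 2.6]{MR600654}---so there is no internal argument to compare against. Your Gaussian-elimination proof is correct and is essentially the standard argument found in the cited source: the points you isolate as the genuine content over a noncommutative $\mathbb{D}$ (row operations as left multiplication, column operations as right multiplication, and the agreement of row rank and column rank so that rank is invariant under two-sided multiplication by invertibles) are exactly the right ones, and the only detail left implicit is the trivial base case where the residual block $P'$ is zero.
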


\begin{lemma}\label{lemma1.4}
	Let $\mathbb{D}$ be a divison ring. If $V$ is an  invertible $m\times m$ matrix over $\mathbb{D}$, $W$ is an  invertible $n\times n$ matrix over $\mathbb{D}$, then $\mathfrak{M}(\mathbb{D}, m, n, P)$ and $\mathfrak{M}(\mathbb{D}, m, n, VPW)$ are ring isomorphic. Moreover, if $\mathbb{D}$ is a field, then $\mathfrak{M}(\mathbb{D}, m, n, P)$ and $\mathfrak{M}(\mathbb{D}, m, n, VPW)$ are algebra isomorphic. 
\end{lemma}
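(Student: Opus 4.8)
The plan is to exhibit an explicit isomorphism given by a two-sided multiplication (a ``change of coordinates'' on the matrix entries) built from $V$ and $W$. Concretely, I would define $\phi \colon \mathfrak{M}(\mathbb{D}, m, n, P) \to \mathfrak{M}(\mathbb{D}, m, n, VPW)$ by $\phi(A) = W^{-1} A V^{-1}$, the right-hand side being the ordinary matrix product. The guiding heuristic is that the sandwich matrix enters the operation only through the middle factor of $A \bullet B = APB$, so replacing $P$ by $VPW$ ought to be absorbed by multiplying each element by the inverses $W^{-1}$ and $V^{-1}$ on the two sides.

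First I would check that $\phi$ is well defined and additive; both are immediate, since $\phi$ is an ordinary matrix multiplication map and distributes over the entrywise addition, $\phi(A + B) = W^{-1}(A+B)V^{-1} = \phi(A) + \phi(B)$. Next comes the multiplicative property, which is the real content. Writing $\star$ for the product of $\mathfrak{M}(\mathbb{D}, m, n, VPW)$, so that $X \star Y = X(VPW)Y$, I would compute $\phi(A) \star \phi(B) = (W^{-1} A V^{-1})(VPW)(W^{-1} B V^{-1})$ and collapse the two cancelling pairs $V^{-1} V$ and $W W^{-1}$ in the middle, obtaining $W^{-1}(APB)V^{-1} = \phi(APB) = \phi(A \bullet B)$. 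This single telescoping cancellation is the crux of the argument; everything hinges on $V$ and $W$ being invertible so that the inserted factors disappear.

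To finish, I would verify bijectivity by exhibiting the explicit inverse $\psi(C) = W C V$, noting that $\psi \circ \phi$ and $\phi \circ \psi$ are both the identity because $W W^{-1} = I$ and $V^{-1} V = I$. For the algebra statement over a field $\mathbb{F}$, I would observe that scalars are central, so $\phi(\lambda A) = W^{-1}(\lambda A)V^{-1} = \lambda\, W^{-1} A V^{-1} = \lambda \phi(A)$; thus $\phi$ is $\mathbb{F}$-linear and hence an algebra isomorphism.

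I do not expect a serious obstacle here: the difficulty is entirely in guessing the correct shape of the map and in keeping the left/right placement of $V^{\pm 1}$ and $W^{\pm 1}$ consistent with the order in which $V$ and $W$ appear in $VPW$. The clean telescoping in the multiplicative check is precisely the signal that the placement is right; had one instead tried $\phi(A) = V^{-1} A W^{-1}$, the inner factors $W^{-1}V$ and $W V^{-1}$ would fail to cancel, so this is the one point that requires care.
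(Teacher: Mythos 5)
Your map $\phi(A) = W^{-1}AV^{-1}$ is exactly the map $\Phi$ used in the paper's proof, and your telescoping computation $(W^{-1}AV^{-1})(VPW)(W^{-1}BV^{-1}) = W^{-1}(APB)V^{-1}$ is precisely the verification the paper leaves to the reader as ``straightforward.'' The proposal is correct and takes the same approach as the paper.
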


\begin{proof}
	Define the map $\Phi : \mathfrak{M}(\mathbb{D}, m, n, P) \rightarrow \mathfrak{M}(\mathbb{D}, m, n, VPW) $ as $\Phi (A)=W^{-1}AV^{-1}$. It is straightforward to verify that  
	$\Phi$ is an isomorphism.
\end{proof}

\section{Munn algebras generated by idempotents or commutators}
\begin{definition}
	Let $\mathbb{R}$ be a ring. For any $x, y \in \mathbb{R}$, define the \textit{Jordan product} by  $x \circ y= xy+yx$. If $\mathbb{R}$ is equal to its additive subgroup generated by elements of the form $e \circ f$  where $e, f \in \mathbb{R}$ are
	idempotents, then we say that $\mathbb{R}$ is \textit{additively spanned by Jordan products of its idempotents}.
\end{definition}

In \cite{MR4584284},  Bre\v{s}ar and  Godoy proved that if $\mathbb{R}$ is a unital ring containing $\frac{1}{2}$, then $\mathrm{M}_n(\mathbb{R})$ (for $n \geq 2$) is additively spanned by Jordan products of its idempotents. 
We present the following generalization:

\begin{theorem}\label{th4.3}
	Let $\mathbb{D}$ be	a division ring,   $P$ be  an $n \times m$ matrix over $\mathbb{D}$ with $\operatorname {rank}P=r$, and $\mathcal{A}=\mathfrak{M}(\mathbb{D}, m, n, P)$, where  $2\leq m, n< \infty$.
	\begin{itemize}
		\item If   $r \geq 1$, then $\mathcal{A} $ is generated by idempotents as  a $\mathbb{D}$-algebra.
		
		\item If   $r \geq 2$, then $\mathcal{A} $ is  generated by idempotents as  a ring.
		
		\item If  $r \geq 2$   and $\operatorname{char}\mathbb{D}\neq 2$, then $\mathcal{A} $ is additively spanned by Jordan products of idempotents.
	\end{itemize}
\end{theorem}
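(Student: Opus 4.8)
My plan is to normalize the sandwich matrix, reduce the claim to single matrix entries, and then produce those entries from idempotents one block at a time; the diagonal of the $r\times r$ ``core'' is the only place where both $r\ge 2$ and $\operatorname{char}\mathbb{D}\neq 2$ are really needed. First I would reduce to the case $P=E_r^{n,m}$. A ring (resp.\ algebra) isomorphism sends idempotents to idempotents and, being additive and multiplicative, sends $e\circ f$ to $\Phi(e)\circ\Phi(f)$; hence the property of being additively spanned by Jordan products of idempotents is preserved by the isomorphism $\Phi$ of Lemma~\ref{lemma1.4}. So, using Theorem~\ref{th4.2} and Lemma~\ref{lemma1.4}, I may take $P=E_r^{n,m}$, whence $p_{sj}=1$ exactly when $s=j\le r$ and $p_{sj}=0$ otherwise. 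In particular $(g,i,s)\bullet(h,j,t)=(gh,i,t)$ when $s=j\le r$ and is $0$ otherwise, and each $(1,i,i)$ with $i\le r$ is an idempotent. Since every element of $\mathcal{A}$ is a finite sum of single-entry matrices $(g,i,s)$, it suffices to place each $(g,i,s)$ in the additive subgroup $\mathcal{S}$ generated by the products $e\circ f$ with $e,f\in\mathcal{A}$ idempotent.

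The core case $i,s\le r$ I would dispatch by recognizing that the single-entry matrices supported in rows and columns $\le r$ form a subalgebra $\mathcal{C}$ of $\mathcal{A}$ isomorphic to $\mathrm{M}_r(\mathbb{D})$ (the product $\bullet$ restricted to $\mathcal{C}$ is ordinary matrix multiplication, because $p_{sj}=\delta_{sj}$ there). Idempotents of $\mathcal{C}$ are idempotents of $\mathcal{A}$ and the two Jordan products agree, so the theorem of Bre\v{s}ar and Godoy \cite{MR4584284}---which applies since $r\ge 2$ and $\operatorname{char}\mathbb{D}\neq 2$ gives $\tfrac12\in\mathbb{D}$---shows $\mathcal{C}\subseteq\mathcal{S}$. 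This already yields every $(g,i,s)$ with $i,s\le r$, diagonal ones included.

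For the three remaining blocks I would write down idempotents explicitly. When $i\le r<s$, the element $(1,i,i)+(g,i,s)$ is idempotent, and a direct check gives $\big((1,i,i)+(g,i,s)\big)\circ(1,i,i)=2(1,i,i)+(g,i,s)$, so that $(g,i,s)=\big((1,i,i)+(g,i,s)\big)\circ(1,i,i)-(1,i,i)\circ(1,i,i)\in\mathcal{S}$; the case $s\le r<i$ is symmetric, using $(1,s,s)+(g,i,s)$ and $(1,s,s)$. Finally, for $i,s>r$ (the annihilator block, where $(g,i,s)$ kills and is killed by everything) I would use the two idempotents $U=(1,1,1)+(g,i,1)$ and $V=(1,1,1)+(1,1,s)$; one computes $U\circ V=2(1,1,1)+(1,1,s)+(g,i,1)+(g,i,s)$, and since $(1,1,s)$ and $(g,i,1)$ already lie in $\mathcal{S}$ by the previous off-core cases while $2(1,1,1)=(1,1,1)\circ(1,1,1)$, it follows that $(g,i,s)\in\mathcal{S}$. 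As every $(g,i,s)$ is thereby accounted for, $\mathcal{A}=\mathcal{S}$.

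The one substantive obstacle is the core diagonal, namely producing $(g,i,i)$ with $i\le r$ for arbitrary $g$: the outer blocks and the off-diagonal core entries are forced by essentially formal idempotent identities, but a diagonal entry $(g,i,i)$ cannot be reached from idempotents of the form $(1,i,i)+(\cdot)$ alone. This is exactly what the Bre\v{s}ar--Godoy argument supplies, pairing $(1,i,i)$ with a rank-one idempotent of $\mathcal{C}$ whose $(i,i)$-entry is prescribed and then halving the result; both $r\ge 2$ (to have a second core index available) and $\operatorname{char}\mathbb{D}\neq 2$ (to invert the factor $2$) are indispensable here, consistent with the failure of the statement at $r=1$.
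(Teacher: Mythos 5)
Your argument establishes only the third of the theorem's three assertions. Everything in the proposal is aimed at placing each $(g,i,s)$ in the additive group $\mathcal{S}$ generated by Jordan products of idempotents, and the two ingredients you use for the core --- the Bre\v{s}ar--Godoy theorem for $\mathrm{M}_r(\mathbb{D})$ and the subtraction identities built from $(1,i,i)\circ(1,i,i)=2(1,i,i)$ --- both require $\operatorname{char}\mathbb{D}\neq 2$. The second assertion, however, is claimed for \emph{every} division ring with $r\ge 2$, including characteristic $2$, and the first assertion is claimed for $r=1$, where your core subring degenerates to $\mathbb{D}$ itself (whose only idempotents are $0$ and $1$), so that one must genuinely use $\mathbb{D}$-scalar multiplication. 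Neither case is touched by your proposal. The missing arguments are short --- e.g.\ $(1,s,s)+(h,s,t)$ and $(1,s,s)+(h,t,s)$ are idempotents for $s\le r$, so $(h,s,t)=h(1,s,1)(1,1,t)$ handles the first item, and for $r\ge2$ one recovers the diagonal in any characteristic from $(h,s,s)=(h,s,t)(1,t,s)$ with $t\neq s$, $t\le r$ --- but they are part of the statement and must be supplied.

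The part you do prove is correct. The reduction to $P=E_r^{n,m}$ via Theorem \ref{th4.2} and Lemma \ref{lemma1.4} is legitimate (the paper makes the same normalization), your verification that the matrices supported in the upper-left $r\times r$ corner form a unital subring isomorphic to $\mathrm{M}_r(\mathbb{D})$ is right, and the idempotent computations for the three outer blocks all check against the rule $(g,i,s)(h,j,t)=(gp_{sj}h,i,t)$; in particular the four-term identity $U\circ V=2(1,1,1)+(1,1,s)+(g,i,1)+(g,i,s)$ for the annihilator block is a clean way to finish. Where you diverge from the paper is the core diagonal: the paper writes down an explicit identity expressing $(h,s,s)$ through the idempotent $(h,s,s)+(h,t,s)+(1-h,s,t)+(1-h,t,t)$ and Jordan squares of idempotents of the form $(1,s,s)+(\cdot)$, whereas you invoke the Bre\v{s}ar--Godoy theorem for $\mathrm{M}_r(\mathbb{D})$ as a black box. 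Both are valid; the paper's route is self-contained and, not incidentally, its intermediate formulas are exactly what make the first two items of the theorem fall out along the way.
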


\begin{proof}
	If $r \geq 1$, then it  is easy to check that $(1, s, s)$, $(1, s, s) + (h, s, t) $ and $(1, s, s) + (h, t, s)$ are idempotents for any $1 \leq s \leq r$, $t\neq s$ and $h\in \mathbb{D}$.
	
	Since $(h, s, t)=h(1, s, 1) (1, 1, t)$ holds for any $1 \leq s \leq m$, 	$1 \leq t \leq n$ and $h\in \mathbb{D}$, it follows that $\mathcal{A}$
	is  generated by idempotents as a $\mathbb{D}$-algebra.
	
	From now on, we assume that $r \geq 2$. Then for any $1 \leq s \leq r$ and $h\in \mathbb{D}$, there exists a $t \neq s$ with $1 \leq t \leq r$  such that
	$(h, s, s) = (h, s, t) (1, t, s) $. Furthermore, we have  $(h, s, t) = (h, s, 1) (1, 1, t)$ for any $1 \leq s \leq m$, $1 \leq t \leq n$ and $h \in \mathbb{D}$. It follows that $\mathcal{A}$
	is  generated by idempotents as a ring.

	It is clear that $(h, s, s) + (h, t, s) + (1 - h, s, t) + (1 - h, t, t)$ is an idempotent for any $1 \leq  s \neq  t  \leq r$ and $h \in \mathbb{D}$. 
	
	If $\operatorname{char}\mathbb{D}\neq 2$, then we can obtain that
	$$\begin{aligned}
		(h, s, s)
		=&((\frac{1}{2}h, s, s) + (\frac{1}{2}h, t, s)+(1 - \frac{1}{2}h, s, t) + (1 - \frac{1}{2}h, t, t)) \circ (1, s, s)\\
		&- ((1, s, s) + (\frac{1}{4}h, t, s)) \circ  ((1, s, s) + (\frac{1}{4}h, t, s)) + (1, s, s) \circ (1, s, s)\\
		&- ((1, s, s) + (\frac{1}{2} - \frac{1}{4}h, s, t)) \circ ((1, s, s) + (\frac{1}{2} - \frac{1}{4}h, s, t)) + (1, s, s) \circ (1, s, s)
	\end{aligned}$$	
	for any $1\leq s \neq  t \leq r$	and $h \in \mathbb{D}$. 
	Since $(h, s, t)=((1, 1, 1)+(h, s, 1))\circ(1, 1, t)-(1, 1, 1)\circ(1, 1, t)
	$ for any $r < s \leq m$, $r <t \leq n$ and $h \in \mathbb{D}$, 
	it can be deduced that $\mathcal{A}$ is additively spanned by Jordan products of idempotents.
\end{proof}

\begin{definition}
	Let $\mathbb{R}$ be a ring. 
	For subsets $X, Y \subseteq \mathbb{R}$,  define
	$
	[X, Y]:=\{[x, y]: x \in X, y \in Y\}$ 
	and $X  Y:=\{x y: x \in X, y \in Y\}. 
	$
	We also write $X^2$ for $X  X$.
\end{definition}

In \cite{arxiv1},  Gardella and  Thiel studied the rings and $C^*$-algebras generated by commutators. They proved that  a unital ring $\mathbb{R}$ is generated by its commutators as an ideal if and only if  there exists a natural number $N$ such that every element in $\mathbb{R}$
is a sum of $N$ products of pairs of commutators.

For a division ring $\mathbb{D}$, $\mathfrak{M}(\mathbb{D} , m, n, P)$ is a ring that might not contain an identity. We give the necessary and sufficient conditions for every element in  $\mathfrak{M}(\mathbb{D} , m, n, P)$
to be a sum of finite products of pairs of commutators. 
Before that, we introduce  Baxter's result   in \cite{MR180577}.
\begin{theorem}\cite{MR180577}\label{th4.6}
	Let $\mathbb{R}$ be a simple associative ring. Then either $\mathbb{R}$ is a
	field or 
	every  element in $\mathbb{R}$ is a sum of products of pairs of commutators, i.e., 
	$\mathbb{R}= \sum [\mathbb{\mathbb{R}}, \mathbb{\mathbb{R}}][\mathbb{\mathbb{R}}, \mathbb{\mathbb{R}}]$.
\end{theorem}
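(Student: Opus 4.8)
The plan is to set $T := \sum [\mathbb{R},\mathbb{R}][\mathbb{R},\mathbb{R}]$, the additive subgroup generated by all products $[a,b][c,d]$, and to prove the dichotomy by showing that $T$ is a two-sided ideal of $\mathbb{R}$. Once this is known, simplicity forces $T = 0$ or $T = \mathbb{R}$, and I will argue that $T = 0$ can happen only when $\mathbb{R}$ is commutative, in which case $\mathbb{R}$ is a field since a simple commutative ring is a field. So the theorem reduces to two claims: (i) if $\mathbb{R}$ is non-commutative then $T \neq 0$, and (ii) $T$ is a two-sided ideal.

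First, the easy observations. Writing $L := [\mathbb{R},\mathbb{R}]$ for the additive span of commutators, $L$ is automatically a Lie ideal, since $[[a,b],r]$ is itself a commutator. The identity $[r,uv] = u[r,v] + [r,u]v$ shows that for $u,v \in L$ one has $[r,uv]\in L\cdot L \subseteq T$, so $T$ is at least a Lie ideal of $\mathbb{R}$. For claim (i) I would argue by contraposition: if $T = 0$, i.e. $LL = 0$, then every $u \in L$ satisfies $u^2 = 0$, and for each $r$ the element $[u,r]\in L$ gives $0 = u[u,r] = -uru$, whence $u\mathbb{R}u = 0$. Since a simple ring is prime, this forces $u = 0$, so $L = 0$ and $\mathbb{R}$ is commutative. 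This settles (i) and the commutative alternative at once.

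The crux is claim (ii). I would first record that the naive approach is circular: to obtain $\mathbb{R}T \subseteq T$ one is led, via $r[a,b] = [ra,b] - [r,b]a$, to needing $L\mathbb{R}L \subseteq T$, and pushing a middle factor through a commutator via $ux = xu + [u,x]$ only trades $L\mathbb{R}L$ back for $\mathbb{R}T$. Thus the commutator calculus alone does not close, and one must invoke the structure theory of simple rings. The approach I would take is Herstein's theory of Lie ideals: a non-central Lie ideal of a simple ring is large, and this is exactly the leverage needed to upgrade the Lie ideal $T$ to an associative ideal. In the case relevant to this paper, where $\mathbb{R}$ has minimal one-sided ideals, so that by the density theorem together with Litoff's theorem every finite subset lies in a copy of $\mathrm{M}_k(\mathbb{D})$ with $k \geq 2$, the claim becomes a finite computation: every matrix unit is a product of two commutators, for instance $e_{ij} = [e_{ik},e_{ki}][e_{ii},e_{ij}]$ for off-diagonal entries and $e_{ii} = [e_{ii},e_{ij}][e_{jj},e_{ji}]$ on the diagonal, so $T$ captures all of $\mathrm{M}_k(\mathbb{D})$, and one transports this back to $\mathbb{R}$.

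The main obstacle is precisely this passage from ``$T$ is a Lie ideal'' to ``$T$ is an associative ideal'' in full generality, in particular for simple rings without minimal one-sided ideals, where the reduction to $\mathrm{M}_k(\mathbb{D})$ is unavailable and one must work with the Lie structure directly while navigating the classical characteristic-$2$ and dimension-$4$ exceptions of that theory. Once (ii) is established, simplicity yields $T = \mathbb{R}$ in the non-commutative case, which is exactly the assertion $\mathbb{R} = \sum[\mathbb{R},\mathbb{R}][\mathbb{R},\mathbb{R}]$.
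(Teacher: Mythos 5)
This statement is quoted from Baxter's paper \cite{MR180577}; the present paper gives no proof of it, so there is no internal argument to compare against --- only your proposal can be judged on its own terms, and it is not complete. Your reduction is sound as far as it goes: the degenerate alternative is handled correctly (if $LL=0$ then $u^2=0$ and $u[u,r]=-uru=0$ for every $u\in L=[\mathbb{R},\mathbb{R}]$, so primeness of the simple ring $\mathbb{R}$ gives $L=0$ and hence commutativity), and the observation that $T=\sum[\mathbb{R},\mathbb{R}][\mathbb{R},\mathbb{R}]$ is a Lie ideal is correct. But the entire content of the theorem is concentrated in your claim (ii), the passage from ``$T$ is a Lie ideal'' to ``$T$ is an associative ideal'' (or directly to $T=\mathbb{R}$), and you do not prove it: you correctly note that the commutator calculus does not close ($\mathbb{R}T\subseteq T$ keeps requiring $L\mathbb{R}L\subseteq T$ and vice versa), you gesture at Herstein's Lie-ideal theory without invoking a specific statement or checking its hypotheses (characteristic $2$, dimension $4$ over the center), and you explicitly label the general case an unresolved ``main obstacle.'' A proof that announces its own missing step is a proof with a gap.

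The gap is not a technicality in an irrelevant corner of the statement. The one case you do settle concretely --- simple rings with minimal one-sided ideals, reduced via Litoff to $\mathrm{M}_k(\mathbb{D})$ with $k\geq 2$, where the matrix-unit identities you give are correct --- excludes non-commutative division rings, since there $k=1$ and no matrix units are available. Yet the only place this paper invokes Theorem \ref{th4.6} is Case 2.1 of Theorem \ref{th4.7}, where it is applied precisely to a non-commutative division ring $\mathbb{D}$ to write $x=\sum_k[a_k,b_k][c_k,d_k]$. So the case your argument actually reaches is disjoint from the case the paper needs. To close the gap one has to carry out the structure-theoretic step you only name: for instance, use Herstein's theorem that in a simple ring the subring generated by $[\mathbb{R},\mathbb{R}]$ is either central or all of $\mathbb{R}$, and then show that this subring is already exhausted by sums of products of pairs of commutators --- which is essentially what Baxter does, and which requires genuinely more than the Lie-ideal bookkeeping in your proposal.
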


\begin{theorem}\label{th4.7}
	Let $\mathbb{D}$ be	a division ring,   $P$ be  an $n \times m$ matrix over $\mathbb{D}$ with $\operatorname {rank}P=r$, and $\mathcal{A}=\mathfrak{M}(\mathbb{D}, m, n, P)$, where  $2\leq m, n< \infty$.  
\begin{itemize}
		\item  If $r\geq 2$, then 
		$\mathcal{A}= \sum [\mathcal{A}, \mathcal{A}][\mathcal{A}, \mathcal{A}]$.
		
		\item 	If $r = 1$, then  $\mathcal{A}= \sum [\mathcal{A}, \mathcal{A}][\mathcal{A}, \mathcal{A}]$ if and only if $\mathbb{D}$ is not commutative.   
	\end{itemize}	
	
\end{theorem}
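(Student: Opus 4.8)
The plan is to reduce to the normal form $P=E_r^{n,m}$ and then exhibit every rank-one matrix $(h,i,l)$ as a finite sum of products of pairs of commutators; since such elements additively span $\mathcal{A}$ and $\sum[\mathcal{A},\mathcal{A}][\mathcal{A},\mathcal{A}]$ is an additive subgroup, this suffices. By Theorem \ref{th4.2} and Lemma \ref{lemma1.4} I may assume $P=E_r^{n,m}$, so that $p_{sj}=1$ exactly when $s=j\le r$ and $(A\bullet B)_{il}=\sum_{j=1}^{r}A_{ij}B_{jl}$. Everything rests on two elementary computations: first, $(h,i,s)\bullet(1,s,l)=(h,i,l)$ for every $s\le r$; second, whenever $u\le r$ and ($s\ne i$ or $s>r$) one has $[(g,i,u),(1,u,s)]=(g,i,s)$, the cross term $p_{si}(g,u,u)$ being $0$, and symmetrically $[(1,s,u'),(1,u',l)]=(1,s,l)$ whenever $u'\le r$ and ($l\ne s$ or $l>r$). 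Splicing these yields the master identity
\[
(h,i,l)=[(h,i,u),(1,u,s)]\bullet[(1,s,u'),(1,u',l)]\qquad(u,u',s\le r),
\]
a genuine product of two commutators, valid as soon as one can pick $s\le r$ with $s\ne i$ and with ($l\ne s$ or $l>r$).

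Assume first $r\ge2$. The values $s$ must avoid inside $\{1,\dots,r\}$ are those of $\{i,l\}$ lying in that range, a set of size at most two that has size two only when $i,l\le r$ and $i\ne l$. Consequently the master identity already realizes every $(h,i,l)$ with $i>r$ or $l>r$. The remaining units are those supported in the top-left $r\times r$ corner, and there I use that $e\mathcal{A}e\cong M_r(\mathbb{D})$ for the idempotent $e=\sum_{i=1}^{r}(1,i,i)$, a subring on which $\bullet$ and $[\,\cdot\,,\,\cdot\,]$ agree with the ordinary matrix operations. As $r\ge2$, $M_r(\mathbb{D})$ is simple and is not a field, so Theorem \ref{th4.6} expresses each corner element as a sum of products of pairs of commutators lying in $e\mathcal{A}e\subseteq\mathcal{A}$. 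Together the two families cover all units, giving $\mathcal{A}=\sum[\mathcal{A},\mathcal{A}][\mathcal{A},\mathcal{A}]$.

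For $r=1$ the corner collapses to $M_1(\mathbb{D})=\mathbb{D}$, and this is exactly where commutativity intervenes. If $\mathbb{D}$ is commutative, then $[X,Y]_{11}=X_{11}Y_{11}-Y_{11}X_{11}=0$ for all $X,Y$, so every product satisfies $([X,Y]\bullet[Z,W])_{11}=[X,Y]_{11}[Z,W]_{11}=0$; hence each element of $\sum[\mathcal{A},\mathcal{A}][\mathcal{A},\mathcal{A}]$ has vanishing $(1,1)$-entry, whereas $(1,1,1)$ does not, and the equality fails. Conversely, if $\mathbb{D}$ is noncommutative it is a simple non-field, so Theorem \ref{th4.6} gives, for each $h\in\mathbb{D}$, an expression $h=\sum_k[a_k,b_k][c_k,d_k]$. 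Using $([a,b],i,1)\bullet([c,d],1,l)=([a,b][c,d],i,l)$ for all $i,l$, it suffices to see that both factors are single commutators: for a row or column index exceeding $1$ this is the identity $[(g,i,u),(1,u,s)]=(g,i,s)$, and for an index equal to $1$ one uses instead $([a,b],1,1)=[(a,1,1),(b,1,1)]$ (and likewise for $[c,d]$). Summing over $k$ then writes every $(h,i,l)$ as a sum of products of pairs of commutators, so again $\mathcal{A}=\sum[\mathcal{A},\mathcal{A}][\mathcal{A},\mathcal{A}]$.

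The recurring obstacle, and the reason the bookkeeping above is needed, is the constraint that each summand be a product of \emph{exactly two} commutators. This is what prevents a naive ``move a corner element to position $(i,l)$'' argument, which would multiply by an extra transport factor and raise the commutator-degree to three; it forces the corner to be handled by Baxter's theorem in the precise form $\sum[\,\cdot\,][\,\cdot\,]$, and the transport factors to be realized as single commutators. The split into $r\ge2$ versus $r=1$ is genuine: only for $r=1$ is the relevant corner a copy of $\mathbb{D}$ itself, so only there can the Baxter dichotomy (field versus non-field) obstruct the conclusion.
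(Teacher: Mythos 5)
Your proof is correct, and for the main case it takes a genuinely different route from the paper's. The two $r=1$ subcases coincide with the paper's argument: Baxter's theorem (Theorem \ref{th4.6}) applied to $\mathbb{D}$ itself when it is noncommutative, and the observation that every commutator --- hence every product of commutators, hence every sum of such --- has vanishing $(1,1)$-entry when $\mathbb{D}$ is a field. The divergence is in the case $r\ge 2$. The paper remains entirely explicit there: it splits an arbitrary matrix into three pieces (the entries avoiding index $1$, the entries avoiding index $2$, and the leftover $(b,1,2)+(c,2,1)$), each realized directly as a product of two concrete commutators, with no appeal to Baxter. You instead use your master identity only for the units with $i>r$ or $l>r$ and delegate the entire top-left $r\times r$ corner to Baxter's theorem through the identification $e\mathcal{A}e\cong \mathrm{M}_r(\mathbb{D})$ with $e=\sum_{i\le r}(1,i,i)$; this is legitimate because $\mathrm{M}_r(\mathbb{D})$ is simple and not a field for $r\ge 2$, and because commutators and $\bullet$-products formed inside the corner agree with those of $\mathcal{A}$. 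Your version is conceptually cleaner (the corner is literally a matrix ring, a known instance of Baxter's dichotomy), but it is non-constructive on the corner and yields no bound on the number of summands, whereas the paper's explicit decomposition is in the spirit of the quantitative estimates of $\xi(\mathcal{A})$ in Theorems \ref{th4.10} and \ref{th4.11}. The one verification both arguments hinge on --- that the reverse term $(p_{si}g,u,u)$ in each commutator vanishes because the relevant sandwich entry is zero --- you carried out correctly.
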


\begin{proof}
	\textbf{Case 1.} $r\geq 2$.
	
	Let $i, j$ be  arbitrary natural numbers with $1 \leq i \leq m$, $1 \leq j \leq n$ and $a, b, c \in \mathbb{D}$. 
	
	If $i \neq 1$ and $j\neq 1$, then
	$(a, i ,1)=
	[(a, i, 1), (1, 1, 1)]
	$
	and
	$(b, 1 ,j)=
	[(1, 1, 1), (b, 1, j)].
	$
	Thus 	$$(a, i, j)=(a, i, 1)(1, 1, j)\in [\mathcal{A}, \mathcal{A}][\mathcal{A}, \mathcal{A}]$$  for any $i \neq 1$, $j\neq 1$ and $a \in \mathbb{D}$. 
	Similarly, we have 
	$$(a, i, j)=(a, i, 2)(1, 2, j)\in [\mathcal{A}, \mathcal{A}][\mathcal{A}, \mathcal{A}]$$
	for any $i \neq 2$, $j\neq 2$ and $a \in \mathbb{D}$.  
	Thus for any $b, c \in \mathbb{D}$, we have
	$$
	(b, 1, 2)+(c, 2, 1)
	=[(1, 1, 2), (1, 2, 1)][(1, 1, 1), ((b, 1, 1)+(c, 2, 1))]\in [\mathcal{A}, \mathcal{A}][\mathcal{A}, \mathcal{A}].
	$$
	It follows that for any $A=\sum_{1 \leq i \leq m, 1 \leq j \leq n}(a_{ij}, i, j) \in \mathcal{A}$,  $A$ can be written as the sum of the above three elements, which means that $\mathcal{A}= \sum [\mathcal{A}, \mathcal{A}][\mathcal{A}, \mathcal{A}]$.
	
	\textbf{Case 2.1. } $r= 1$ and  $\mathbb{D}$ is not commutative.
	
	By Theorem \ref{th4.6}, for any $x\in \mathbb{D}$, there exist   $s\in \mathbb{N}$ and $\{a_k\}_{i=k}^{s}, \{b_k\}_{k=1}^{s}, \{c_k\}_{k=1}^{s}, \{d_k\}_{k=1}^{s} \subseteq  \mathbb{D}$  such that 
	$x=\sum_{k=1}^{s}[a_k, b_k][c_k, d_k]$. 
	Thus, we have 
	$$(x, 1, 1)=\sum_{k=1}^{s}[(a_k, 1, 1), (b_k, 1, 1)][(c_k, 1, 1), (d_k, 1, 1)]\in [\mathcal{A}, \mathcal{A}][\mathcal{A}, \mathcal{A}].$$
	
	Since $\mathbb{D}$ is not commutative, there exist $a, b \in \mathbb{D}$ such that $[a, b]$ is invertible. Thus for any $i \neq 1$ and $j\neq 1$, we have
	$$
	\begin{aligned}
		(x, 1, j)& = [(a, 1, 1), (b, 1, 1)] [(1, 1, 1), ([a, b]^{-1}x, 1, j)],\\
		(x, i, 1)& = [(x[a, b]^{-1}, i, 1), (1, 1, 1)] [(a, 1, 1), (b, 1, 1)],\\
		(x, i, j)& =    [(x, i, 1), (1, 1, 1)] [(1, 1, 1), (1, 1, j)].
	\end{aligned}
	$$	
	It follows that $\mathcal{A}= \sum [\mathcal{A}, \mathcal{A}][\mathcal{A}, \mathcal{A}]$.
	
	\textbf{Case2.2. } $r= 1$ and  $\mathbb{D}$ is a field.
	
	In this case, for any matrix $T \in [\mathcal{A}, \mathcal{A}]$, 
	the element at position $(1, 1)$ in $T$ is zero, which means that $\mathcal{A} \neq \sum[\mathcal{A}, \mathcal{A}][\mathcal{A}, \mathcal{A}]$.
\end{proof}

In \cite{MR1908928},  Pop proved that
elements in unital $C^*$-algebras without tracial states can be represented as finite sums of commutators and he   defined the invariant $\nu(\mathcal{A})$ as the least positive integer $N$ such that $\mathcal{A}=\sum^N[\mathcal{A}, \mathcal{A}]$. 
Inspired by  Pop's work,  Gardella and  Thiel \cite{arxiv1} introduced the invariant $\xi(\mathbb{R})$. 
\begin{definition}
	For a ring $\mathbb{R}$, if there exists an $N\in \mathbb{N}$ such that $\mathbb{R} = \sum^{N}[\mathbb{R}, \mathbb{R}][\mathbb{R}, \mathbb{R}]$, define
	$$
	\xi(\mathbb{R}):=\min \left\{N \in \mathbb{N}: \mathbb{R}=\sum\nolimits^N[\mathbb{R}, \mathbb{R}][\mathbb{R}, \mathbb{R}]\right\}.
	$$
\end{definition}

In \cite[Theorem 5.5]{arxiv1},  Gardella and  Thiel proved that
for a unital ring $\mathbb{R}$  and $n \geq 2$,  $\xi(\mathrm{M}_n(\mathbb{R})) \leq 2$. 
For $\mathcal{A}=\mathfrak{M}(\mathbb{D}, m, n, P)$, 
we estimate  $\xi(\mathcal{A})$ based on the rank of $P$. 

\begin{theorem}\label{th4.10}
	Let $\mathbb{D}$ be	a division ring,   $P$ be  an $n \times m$ matrix over $\mathbb{D}$ with $\operatorname {rank}P=r$, and $\mathcal{A}=\mathfrak{M}(\mathbb{D}, m, n, P)$, where  $2\leq m, n< \infty$. 	
	If $r = \operatorname{min} \{m, n\}$, then $\xi(\mathcal{A}) \leq 2$.
\end{theorem}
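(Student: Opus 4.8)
The plan is to first normalize $P$ and then reduce the statement to a coupled pair of tasks on a matrix block and a module block. By Theorem \ref{th4.2} and Lemma \ref{lemma1.4} I may assume $P=E_r^{n,m}$ with $r=\min\{m,n\}$. The two cases $m\le n$ and $n\le m$ are interchanged by transposition: the map $A\mapsto A^{t}$ together with the anti-isomorphism $\mathbb{D}\to\mathbb{D}^{\mathrm{op}}$ identifies $\mathcal{A}^{\mathrm{op}}$ with a Munn algebra of the transposed shape over $\mathbb{D}^{\mathrm{op}}$, and since $\xi$ is invariant under passing to the opposite ring (transposing a product of two commutators again yields such a product), it suffices to treat $m\le n$, so $r=m$ and $P=\left(\begin{smallmatrix}I_m\\0\end{smallmatrix}\right)$. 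For this $P$ one checks that $A\bullet B$ depends only on the first $m$ columns $A_1$ of $A$, namely $A\bullet B=A_1B$; writing $A=(A_1\mid A_2)$ with $A_1\in \mathrm{M}_m(\mathbb{D})$ and $A_2\in V:=\mathrm{M}_{m\times(n-m)}(\mathbb{D})$, the multiplication becomes $(A_1,A_2)\bullet(B_1,B_2)=(A_1B_1,\ A_1B_2)$. Thus $\mathcal{A}$ is isomorphic to the split null extension $\mathrm{M}_m(\mathbb{D})\oplus V$, where $V$ is a square-zero ideal carrying the natural left action of $\mathrm{M}_m(\mathbb{D})$ and trivial right action. (This clean structure is exactly what the hypothesis $r=\min\{m,n\}$ buys us.)

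Next I record how commutator products look in this model. A direct computation gives $[(a,x),(b,y)]=([a,b],\,ay-bx)$ and hence
\[
[(a,x),(b,y)]\bullet[(c,z),(d,w)]=\bigl([a,b][c,d],\ [a,b](cw-dz)\bigr).
\]
The decisive feature is that the $V$-coordinate $[a,b](cw-dz)$ is governed by the free module parameters $z,w$ and does \emph{not} involve $x,y$; so for a fixed choice of the matrices $a,b,c,d$ (which alone determine the $\mathrm{M}_m(\mathbb{D})$-coordinate) the $V$-coordinate can still be steered independently. Consequently the goal $A=(A_1,A_2)$ splits into two tasks: (i) realise $A_1$ as a sum of two products of commutator pairs in $\mathrm{M}_m(\mathbb{D})$, and (ii) simultaneously solve $\sum_i[a_i,b_i](c_iw_i-d_iz_i)=A_2$ for the module parameters. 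Task (ii) is solvable for \emph{every} $A_2\in V$ as soon as one of the coefficient matrices $[a_i,b_i]c_i$ or $[a_i,b_i]d_i$ is invertible, since left multiplication by an invertible $m\times m$ matrix is a bijection of $V$.

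For task (i) I will invoke $\xi(\mathrm{M}_m(\mathbb{D}))\le 2$ (valid since $m\ge 2$, by Gardella--Thiel and by the methods already used for Theorem \ref{th4.7}), but in a refined form: I aim to write $A_1=[a_1,b_1][c_1,d_1]+h[g,d_2]$ with $h=[a_2,b_2]$ and $g=c_2$ both \emph{invertible}. The enabling fact is the existence of an invertible commutator in $\mathrm{M}_m(\mathbb{D})$ for every $m\ge 2$ and every division ring; for instance the companion matrix of $x^{m}-1$ has scalar entries, trace $0$ and determinant $\pm1$, and is a commutator. Granting the refined representation, I set the module parameters of the first product and $z_2$ to zero and solve $hg\,w_2=A_2$, which is possible because $hg$ is invertible; since adjusting $w_2$ does not affect the $\mathrm{M}_m(\mathbb{D})$-coordinates, this exhibits $A$ as a sum of exactly two products of commutator pairs, whence $\xi(\mathcal{A})\le 2$.

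The main obstacle is precisely this \emph{coupling made uniform over all division rings}: I must secure an invertible coefficient inside a two-term representation of $A_1$ while that representation still reproduces $A_1$, and I cannot rely on eigenvalue or diagonalization tricks, which break down over small fields or noncommutative $\mathbb{D}$. The way I expect to clear it is to let the summand $h[g,d_2]$ (with $h,g$ invertible) carry the module load and to absorb the trace obstruction of $A_1$ into the remaining single product: since $\operatorname{Im}(\operatorname{ad}_g)$ for a regular invertible $g$ exhausts the additive commutator space, it is enough to choose the single-product term so that $h^{-1}(A_1-\text{that term})$ has trace in $[\mathbb{D},\mathbb{D}]$, after which $d_2$ is determined. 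Checking that a single product of controllable trace is always available is the one routine-but-essential verification that makes the scheme go through, completing the case $m\le n$; the case $n\le m$ then follows from the opposite-ring reduction of the first paragraph.
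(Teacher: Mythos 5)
Your setup is correct and genuinely different from the paper's: the normalization $P=\left(\begin{smallmatrix}I_m\\0\end{smallmatrix}\right)$, the opposite-ring reduction to $m\le n$, the identification of $\mathcal{A}$ with the split null extension $\mathrm{M}_m(\mathbb{D})\oplus V$, the formula $[(a,x),(b,y)]\bullet[(c,z),(d,w)]=\bigl([a,b][c,d],\,[a,b](cw-dz)\bigr)$, and the observation that the $V$-component can be hit for free once $[a_2,b_2]$ and $c_2$ are invertible are all sound. (The paper instead argues with explicit matrices: a direct computation for $m=2$, and for $m\ge 3$ the Gardella--Thiel shift-matrix device, which realizes every matrix whose last one or two rows vanish as a single product of two commutators and then adds two such pieces.)

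The decisive step, however, is not established, and the lemma you invoke for it is false. You need $A_1=[a_1,b_1][c_1,d_1]+h[g,d_2]$ with $h=[a_2,b_2]$ and $g=c_2$ invertible, and you justify solvability for $d_2$ by asserting that for a regular invertible $g$ the image of $\operatorname{ad}_g$ exhausts the additive commutator space, so that only a trace condition must be arranged. Over a field this is wrong for every $m\ge 2$: $\dim\operatorname{Im}(\operatorname{ad}_g)=m^2-\dim C(g)\le m^2-m$, whereas the trace-zero matrices form a space of dimension $m^2-1$, so $\operatorname{Im}(\operatorname{ad}_g)$ is a proper subspace for any \emph{fixed} $g$ and $d_2$ cannot simply be ``determined'' after matching traces. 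To repair the scheme you would have to let $g$ depend on the target and prove that every matrix of $\mathrm{M}_m(\mathbb{D})$ with trace in $[\mathbb{D},\mathbb{D}]$ is a commutator $[g,d]$ with $g$ invertible, uniformly over all division rings and all (possibly very small) fields --- a statement of essentially the same depth as the theorem, which you assert rather than prove; the existence of one invertible commutator (your companion matrix) does not supply it, and that companion matrix being a commutator over an arbitrary division ring is itself unproved in your sketch. As written, the proof has a genuine gap at its crux.
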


\begin{proof}
	Without loss of generality, we may assume  that $r = m \leq n$.
	
	\textbf{Case 1.} $m = 2$.
	
	For any given $a_{1, p}, 2 \leq p \leq n$ and $a_{2, 1}$ in $\mathbb{D}$, we have
	$$
	\left(\begin{matrix}
		0 & a_{1, 2} & \cdots & a_{1, n} \\
		a_{2, 1} & 0 &  \cdots & 0
	\end{matrix}\right)
	=
	\left[\left(\begin{matrix}
		1 & 0  & \cdots & 0\\
		0 & 0  & \cdots & 0
	\end{matrix}\right),\left(\begin{matrix}
		0 & a_{1, 2}  & \cdots & a_{1, n} \\
		-a_{2, 1} & 0  & \cdots & 0
	\end{matrix}\right)\right]
	\in \left[ \mathcal{A}, \mathcal{A} \right].
	$$	
	Further, we have
	\begin{equation}\label{eq4.2}
		\left(\begin{matrix}
			1 & 0 & 0 & \cdots & 0\\
			0 & -1 & 0 & \cdots & 0
		\end{matrix}\right) 
		=
		\left[\left(\begin{matrix}
			0 & 1 & 0 & \cdots & 0 \\
			0 & 0 & 0 & \cdots & 0
		\end{matrix}\right),\left(\begin{matrix}
			0 & 0 & 0 & \cdots & 0\\
			1 & 0 & 0 & \cdots & 0
		\end{matrix}\right)\right] \in \left[\mathcal{A}, \mathcal{A}\right] .
	\end{equation}
	Thus, for any $a_{i, j} \in \mathbb{D}, 1 \leq i \leq 2$ and $1 \leq j \leq n$, we have
	$$
	\begin{aligned}
		\left(\begin{matrix}
			a_{1, 1} & a_{1, 2} & a_{1, 3} & \cdots & a_{1, n}\\
			a_{2, 1} & a_{2, 2} & a_{2, 3} & \cdots & a_{2, n}
		\end{matrix}\right)
		=&\left(\begin{matrix}
			1 & 0 & 0 & \cdots & 0\\
			0 & -1 & 0 & \cdots & 0
		\end{matrix}\right)\left(\begin{matrix}
			0 & a_{1, 2} & a_{1, 3} & \cdots & a_{1, n} \\
			-a_{2, 1} & 0 & 0 & \cdots & 0
		\end{matrix}\right)\\
		+\left(\begin{matrix}
			0 & 1 & 0 & \cdots & 0\\
			1 & 0 & 0 & \cdots & 0
		\end{matrix}\right) & 
		\left(\begin{matrix}
			0 & a_{2, 2} & a_{2, 3} & \cdots & a_{2, n}\\
			a_{1, 1} & 0 & 0 & \cdots & 0
		\end{matrix}\right) 
		\in \sum\nolimits^2\left[\mathcal{A}, \mathcal{A}\right]^2.
	\end{aligned}
	$$
	
	\textbf{Case 2.} $m \geq 3$.
	
	Let $a \in \mathcal{A}$.
	As the proof in \cite[Theorem 5.5]{arxiv1}, we 
	set $$
	x := \sum\nolimits_{j=1}^{m-1}(1, j+1, j), \quad 
	y := \sum\nolimits_{j=1}^{m-1}(1, j, j+1), \quad 
	c := a + \sum\nolimits_{k=2}^{m}x^{k-1}ay^{k-1}.
	$$	
	Thus we have $yx = \sum_{j=1}^{m-1}(1, j, j)$, and $xcy = c - a$. Therefore $[y, xc] = a - (1, m, m)c$, 
	which implies that the entries of the matrix $[y, xc]$
	match those of $a$ in the first $( n - 1 )$ rows.

	Depending on the parity of $m$, we construct the following:
	$$d :=
	\begin{cases}
		\sum\nolimits_{j=1}^{(m-1)/2}((1, 2j-1, 2j-1) + (-1, 2j, 2j)), & $m$ \text{ is odd},\\
		\sum\nolimits_{j=1}^{(m-2)/2}((1, 2j-1, 2j-1) + (-1, 2j, 2j)), & $m$ \text{ is even}.
	\end{cases}
	$$
	
	Therefore by \eqref{eq4.2}, we have 
	$$d =
	\begin{cases}
		[\sum_{j=1}^{(m-1)/2}(1, 2j-1, 2j),  \sum_{j=1}^{(m-1)/2}(1, 2j, 2j-1)], & $m$ \text{ is odd},\\
		[\sum_{j=1}^{(m-2)/2}(1, 2j-1, 2j),  \sum_{j=1}^{(m-2)/2}(1, 2j, 2j-1)], & $m$ \text{ is even}.
	\end{cases}
	$$	
	
	It follows that   $da = d[y, xc]\in [\mathcal{A}, \mathcal{A}]^2$ for any $a \in \mathcal{A}$ where $da$  is an arbitrary matrix in  $\mathcal{A}$ 
	whose last $p_m$ rows are zero, and
	$p_m = 2-(m \operatorname{mod} 2)$. 
	Similarly, every matrix in $\mathcal{A}$ whose first $p_m$ rows are zero  can also be expressed as a product of two commutators. 
	Thus for any $m \geq 3$, every matrix in $\mathcal{A}$ belongs to $\sum^2\left[\mathcal{A}, \mathcal{A}\right]^2$.
\end{proof}

When $P$ is not of full rank,
we have the following estimate for $\xi(\mathcal{A})$.

\begin{theorem}\label{th4.11}
	Let $\mathbb{D}$ be	a division ring,   $P$ be  an $n \times m$ matrix over $\mathbb{D}$ with $\operatorname {rank}P=r \geq 2$, and $\mathcal{A}=\mathfrak{M}(\mathbb{D}, m, n, P)$, where  $2\leq m, n< \infty$. 
	Then $\lceil \min\{m, n\}/r\rceil \leq \xi(\mathcal{A})\leq \lceil \min\{m, n\}/r\rceil+3$, where $\lceil x\rceil$ denotes 
	the smallest integer $\geq x$. 
\end{theorem}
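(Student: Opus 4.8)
The plan is to prove the two inequalities separately. \emph{Reductions and lower bound.} By Theorem~\ref{th4.2} and Lemma~\ref{lemma1.4} I assume $P=E_r^{n,m}$, and since the transpose anti-isomorphism $A\mapsto A^{T}$ carries $\mathcal A$ onto $\mathfrak{M}(\mathbb D^{\mathrm{op}},n,m,P^{T})$, sends products of two commutators to products of two commutators, and interchanges $m$ and $n$, I may assume $m\le n$; put $q:=\lceil\min\{m,n\}/r\rceil=\lceil m/r\rceil$. For the lower bound, note that any $c\bullet d=cPd$ has $\operatorname{rank}(c\bullet d)\le\operatorname{rank}P=r$, so every element of $\sum^{N}[\mathcal A,\mathcal A][\mathcal A,\mathcal A]$ has matrix rank at most $Nr$ by subadditivity of rank. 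As $\mathcal A$ contains matrices of rank $\min\{m,n\}$, the equality $\mathcal A=\sum^{N}[\mathcal A,\mathcal A][\mathcal A,\mathcal A]$ forces $Nr\ge\min\{m,n\}$, i.e.\ $\xi(\mathcal A)\ge q$.

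\emph{Factored form and achievable shapes.} With $P=E_r^{n,m}$ one checks that $A\bullet B=\pi_L(A)\,\pi_R(B)$, where $\pi_L(A)$ is the $m\times r$ block of the first $r$ columns of $A$ and $\pi_R(B)$ the $r\times n$ block of the first $r$ rows of $B$. Splitting rows and columns as $\{1,\dots,r\}\cup\{r+1,\dots\}$, a short computation shows that for single commutators $c=[X,Y]$, $d=[Z,W]$ the top-left $r\times r$ block of $\pi_L(c)$ equals $[X_{11},Y_{11}]$ and its lower $(m-r)\times r$ part is arbitrary, while symmetrically the left $r\times r$ block of $\pi_R(d)$ equals $[Z_{11},W_{11}]$ with arbitrary right $r\times(n-r)$ part. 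Hence $c\bullet d=\pi_L(c)\pi_R(d)$ produces, in one product of two commutators, each of $\begin{psmallmatrix}\alpha\gamma&0\\0&0\end{psmallmatrix}$, $\begin{psmallmatrix}0&\alpha\delta\\0&0\end{psmallmatrix}$, $\begin{psmallmatrix}0&0\\\beta\gamma&0\end{psmallmatrix}$, $\begin{psmallmatrix}0&0\\0&\beta\delta\end{psmallmatrix}$, with $\alpha,\gamma$ arbitrary single commutators of $M_r(\mathbb D)$ and $\beta,\delta$ arbitrary.

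\emph{Counting.} Decompose $A=\begin{psmallmatrix}A_{11}&A_{12}\\A_{21}&A_{22}\end{psmallmatrix}$ into blocks. The corner $\begin{psmallmatrix}A_{11}&0\\0&0\end{psmallmatrix}$ sits in the unital subring $\cong M_r(\mathbb D)$, so \cite[Theorem~5.5]{arxiv1} writes it as a sum of at most two products of pairs of commutators. Fixing one invertible single commutator $\gamma_0$ of $M_r(\mathbb D)$ (available since $r\ge2$) and using it as $\gamma$ resp.\ $\alpha$, the blocks $\begin{psmallmatrix}0&0\\A_{21}&0\end{psmallmatrix}$ and $\begin{psmallmatrix}0&A_{12}\\0&0\end{psmallmatrix}$ cost one product each via $\beta\gamma_0=A_{21}$ and $\gamma_0\delta=A_{12}$. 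Finally $A_{22}$ is $(m-r)\times(n-r)$ of rank $\le m-r$; grouping its rows into $\lceil(m-r)/r\rceil=q-1$ blocks of $\le r$ rows writes it as a sum of $q-1$ rank-$\le r$ matrices $\beta_l\delta_l$, each one product of the last shape. Adding, $A\in\sum^{2+1+1+(q-1)}[\mathcal A,\mathcal A][\mathcal A,\mathcal A]=\sum^{q+3}[\mathcal A,\mathcal A][\mathcal A,\mathcal A]$, giving $\xi(\mathcal A)\le q+3$.

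\emph{Main obstacle.} The bookkeeping and the freeness of the off-diagonal parts of $\pi_L,\pi_R$ are routine; the two substantive points are (i) the existence of an invertible single commutator $\gamma_0\in M_r(\mathbb D)$ for $r\ge2$ (for even $r$, a block sum of $\bigl[\begin{psmallmatrix}0&1\\0&0\end{psmallmatrix},\begin{psmallmatrix}0&0\\1&0\end{psmallmatrix}\bigr]=\begin{psmallmatrix}1&0\\0&-1\end{psmallmatrix}$; for odd $r$, a weighted cyclic commutator $[\Lambda,N]$ with $\Lambda$ diagonal and $N$ cyclic), and (ii) that the corner block $A_{11}$ is in general \emph{not} a single product of two commutators, since its top-left block would itself have to be such a product, which is precisely why the Gardella--Thiel bound $\xi(M_r(\mathbb D))\le2$ is needed and yields the extra summand in $q+3$.
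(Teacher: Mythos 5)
Your argument is correct and lands on the same count $\lceil m/r\rceil+3$ as the paper, but it is organized differently, so a comparison is worthwhile. The paper (after assuming $m\le n$ and $P=E_r^{n,m}$) splits $T$ into the full strip of the first $r$ rows and the strip of rows $r+1,\dots,m$ restricted to columns $1,\dots,r$, pays $2$ for each by applying Theorem \ref{th4.10} to these full-rank sub-Munn-algebras, and then cuts the remaining $(m-r)\times(n-r)$ corner into $\lceil m/r\rceil-1$ row-strips of height at most $r$, each a single product of two commutators; total $2+2+(\lceil m/r\rceil-1)$. You instead isolate the $r\times r$ corner (cost $2$ via Gardella--Thiel applied to $\mathrm{M}_r(\mathbb D)$), pay one product each for the blocks $A_{12}$ and $A_{21}$ by means of an invertible single commutator $\gamma_0\in \mathrm{M}_r(\mathbb D)$, and use the same $\lceil m/r\rceil-1$ strips for $A_{22}$; total $2+1+1+(\lceil m/r\rceil-1)$. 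Two things your route adds: a justified reduction to $m\le n$ via transposition, and, more importantly, an explicit proof of the lower bound (each element of $[\mathcal A,\mathcal A][\mathcal A,\mathcal A]$ is some $cPd$ and so has rank at most $r$, rank is subadditive, and $\mathcal A$ contains elements of rank $\min\{m,n\}$), which the paper dismisses with ``it is evident''; this is the intended argument and worth recording.

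The one weak spot is your item (i). For odd $r$ the commutator $[\Lambda,N]$ with $\Lambda=\mathrm{diag}(\lambda_1,\dots,\lambda_r)$ and $N$ the cyclic shift is the weighted cyclic shift with weights $\lambda_i-\lambda_{i+1}$ (indices mod $r$), so its invertibility amounts to a proper colouring of an odd cycle and requires three pairwise distinct values among the $\lambda_i$; this is impossible precisely when $\mathbb D$ is the two-element field. The existence of an invertible single commutator in $\mathrm{M}_r(\mathbb D)$ is still true in that case (for instance the companion matrix of $x^3+x+1$ over $\mathbb F_2$ is invertible of trace zero, trace-zero matrices over a field are single commutators by Albert--Muckenhoupt, and direct sums of commutators are commutators, reducing general odd $r$ to $r=3$), but as written you must either supply such a reference or sidestep the issue as the paper does, by absorbing $A_{12}$ into the first-$r$-rows strip and $A_{21}$ into the left column strip and invoking Theorem \ref{th4.10} twice. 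With that repair the proof is complete.
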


\begin{proof}
	Without loss of generality, we may assume  that $m \leq n$.
	
	For any $A=\sum\limits_{\substack{1 \leq i \leq r\\ 1 \leq j \leq n}}(x_{i, j}, i, j)$, 
	by Theorem \ref{th4.10}, we have $A\in \sum^{2}[\mathcal{A}, \mathcal{A}]^2$.\\	
	For any $B_k=\sum\limits_{\substack{kr+1 \leq i \leq  (k+1)r\\ r+1 \leq j \leq n}}(x_{i, j}, i, j)$,
	where $k \in \mathbb{N}_+$ satisfies $kr \leq m$, we have 
	$$\begin{aligned}
		B_k&=(\sum_{i=kr+1}^{(k+1)r}(1, i, i-kr)) \cdot( \sum_{\substack{1 \leq i \leq  r\\ r+1 \leq j \leq n}}(x_{i+kr, j}, i, j))\\
		&=[\sum_{i=kr+1}^{(k+1)r}(1, i, i-kr), \sum_{i=1}^{r}(1, i, i)] \cdot [\sum_{1=1}^{r}(1, i, i), \sum_{\substack{1 \leq i \leq  r\\ r+1 \leq j \leq n}}(x_{i+kr, j}, i, j)] \in [\mathcal{A}, \mathcal{A}]^2.
	\end{aligned}
	$$	
	For any $C=\sum\limits_{\substack{gr+1 \leq i \leq  m\\ r+1 \leq j \leq n}}(x_{i, j}, i, j)$,
	where $g \in \mathbb{N}_+$  satisfies $gr < m <(g+1)r$, we have
	$$\begin{aligned}
		C&=(\sum_{i=gr+1}^{m}(1, i, i-gr)) \cdot (\sum_{\substack{1 \leq i \leq  m-gr\\ r+1 \leq j \leq n}}(x_{i+gr, j}, i, j))\\
		&=[\sum_{i=gr+1}^{m}(1, i, i-gr), \sum_{i=1}^{r}(1, i, i)] \cdot [\sum_{1=1}^{m-gr}(1, i, i), \sum_{\substack{1 \leq i \leq  m-gr\\ r+1 \leq j \leq n}}(x_{i+gr, j}, i, j)] \in [\mathcal{A}, \mathcal{A}]^2.
	\end{aligned}$$	
	For any $D=\sum\limits_{\substack{r+1 \leq i \leq  m\\ 1 \leq j \leq r}}(x_{i, j}, i, j)$, by Theorem \ref{th4.10}, we have $D\in \sum^{2}[\mathcal{A}, \mathcal{A}]^2$. 
	
	Indeed, $A, B_k, C, D$ are as follows: 
	\begin{center}
		\begin{tabular}{|c|c|}
			\hline
			\multicolumn{2}{|c|}{$A$} \\
			\hline
			\multirow{3}{*}{$D$} & \makebox[10em]{$B_1$} \\
			\cline{2-2}
			& \makebox[10em]{$B_2$} \\
			\cline{2-2}
			& \makebox[10em]{$\vdots$} \\
			\cline{2-2}
			& \makebox[10em]{$C$} \\
			\hline
		\end{tabular}.
	\end{center}

	Therefore, for any $T \in \mathcal{A}$, there exist $A, B_k, C, D$ of the above form  such that 
	$$
	T=A+\sum_{k=1}^{\lceil m/r\rceil-2}B_k+C+D \in \sum\limits^{ \lceil m/r\rceil+3}[\mathcal{A}, \mathcal{A}]^2,
	$$
	which means	that $\xi(\mathcal{A})\leq \lceil \min\{m, n\}/r\rceil+3$.  It is evident that $\lceil \min\{m, n\}/r\rceil \leq \xi(\mathcal{A})$.
\end{proof}

In \cite{Arxiv2},  Bre\v{s}ar,  Gardella and  Thiel proved that for some special  matrix  rings,  every element in these matrix  rings is a product of two commutators. 
Based on their proof, we can derive the following theorem.

\begin{theorem}		Let $\mathbb{D}$ be	a division ring,   $P$ be  an $n \times m$ matrix over $\mathbb{D}$ with $\operatorname {rank}P=r \geq 2$, and $\mathcal{A}=\mathfrak{M}(\mathbb{D}, m, n, P)$, where  $2\leq m, n< \infty$. 
	If  one of the following statements holds:
	\begin{itemize}
		\item\label{i}  	$\operatorname{min} \{m, n\} =2$,
		
		\item\label{ii}
		$\mathbb{D}$ is a division ring with infinite center,
	\end{itemize}
	then $\xi(\mathcal{A}) =1$.
\end{theorem}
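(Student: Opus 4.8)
The plan is to normalize the sandwich matrix, pass to a block decomposition, and reduce the statement to a refined factorization of a single matrix as a product of two commutators in $M_m(\mathbb{D})$. First I would invoke Theorem~\ref{th4.2} and Lemma~\ref{lemma1.4} to assume $P=E_r^{n,m}$. Since Theorem~\ref{th4.11} gives $\lceil\min\{m,n\}/r\rceil\le\xi(\mathcal{A})$, the equality $\xi(\mathcal{A})=1$ can hold only when $r=\min\{m,n\}$; this is automatic under the first hypothesis, and I work throughout in this full-rank regime. Assuming $m\le n$ and $r=m$, we have $P=\begin{psmallmatrix}I_m\\0\end{psmallmatrix}$. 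Writing every element of $\mathcal{A}$ in block form $(X_1\mid X_2)$ with $X_1\in M_m(\mathbb{D})$ and $X_2\in M_{m\times(n-m)}(\mathbb{D})$, a direct computation gives $A\bullet B=(A_1B_1\mid A_1B_2)$ and $[A,B]=([A_1,B_1]\mid A_1B_2-B_1A_2)$, whence, for $C=[S,T]$ and $D=[U,V]$,
\[
C\bullet D=\bigl([S_1,T_1][U_1,V_1]\ \bigm|\ [S_1,T_1](U_1V_2-V_1U_2)\bigr).
\]

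Thus expressing an arbitrary $(A_1\mid A_2)\in\mathcal{A}$ as a single product of two commutators splits into two tasks: (I) factor the square block as $A_1=[S_1,T_1][U_1,V_1]$ in $M_m(\mathbb{D})$, and (II) solve $[S_1,T_1](U_1V_2-V_1U_2)=A_2$ for the rectangular blocks $U_2,V_2$, which are otherwise free (the blocks $S_2,T_2$ do not occur at all). Put $C_1:=[S_1,T_1]$. If the factorization in (I) can be arranged with both $C_1$ and $U_1$ invertible, then (II) is solved at once by $U_2=0$ and $V_2=U_1^{-1}C_1^{-1}A_2$; assembling $S=(S_1\mid0)$, $T=(T_1\mid0)$, $U=(U_1\mid0)$, $V=(V_1\mid V_2)$ then yields $A=[S,T]\bullet[U,V]$. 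Hence the whole theorem reduces to the following \emph{refined factorization} property in $M_m(\mathbb{D})$: every $A_1\in M_m(\mathbb{D})$ can be written as $A_1=C_1[U_1,V_1]$ with $C_1$ an invertible commutator and $U_1$ invertible.

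Establishing this refined factorization is the main obstacle, and it is exactly where the hypotheses and the constructions of \cite{Arxiv2} enter. Concretely, one fixes an invertible commutator $C_1$ in $M_m(\mathbb{D})$ (for $m=2$, e.g. $\bigl[\begin{psmallmatrix}1&0\\0&0\end{psmallmatrix},\begin{psmallmatrix}0&1\\1&0\end{psmallmatrix}\bigr]=\begin{psmallmatrix}0&1\\-1&0\end{psmallmatrix}$) and must then represent $C_1^{-1}A_1$ as a commutator $[U_1,V_1]$ with $U_1$ invertible. Over a noncommutative division ring the property of being a commutator is not captured by a single trace condition, so this cannot be read off from a naive dimension count; instead I would extract it from the explicit representations in \cite{Arxiv2}. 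When $\min\{m,n\}=2$ I would verify the factorization by a direct $2\times2$ computation valid over an arbitrary division ring, and when $\mathbb{D}$ has infinite center I would use a genericity argument over the (infinite) center: the finitely many inequations expressing invertibility of $C_1$ and $U_1$ determine a nonempty set of admissible choices, so invertibility of both factors can be secured simultaneously while keeping their product equal to $A_1$.

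Granting the refined factorization, the assembly described above produces $A=[S,T]\bullet[U,V]$ for every $A\in\mathcal{A}$, so $\mathcal{A}=[\mathcal{A},\mathcal{A}]\bullet[\mathcal{A},\mathcal{A}]$ and $\xi(\mathcal{A})=1$. The step I expect to be most delicate is precisely the simultaneous control of invertibility demanded by tasks (I) and (II) over a general division ring; once that is in place, the block bookkeeping is routine.
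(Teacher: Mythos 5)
Your proposal follows essentially the same route as the paper: normalize $P$ to $\begin{psmallmatrix}I_m\\0\end{psmallmatrix}$, decompose into blocks $(A_1\mid A_2)$, and reduce everything to the refined factorization $A_1=[S_1,T_1][U_1,V_1]$ in $M_m(\mathbb{D})$ with $[S_1,T_1]$ and $U_1$ invertible, which both you and the paper import from the proof of Theorem 4.4 in \cite{Arxiv2}; your assembly $V_2=U_1^{-1}[S_1,T_1]^{-1}A_2$ is exactly the paper's. Your explicit remark that the argument implicitly requires $r=\min\{m,n\}$ (forced under hypothesis (i), tacitly assumed by the paper's ``without loss of generality'' under hypothesis (ii)) is a correct and worthwhile observation.
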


\begin{proof}
	Without loss of generality, we may assume  that $m \leq n$ and $P=\begin{pmatrix}
		I_m \\
		0 \\
	\end{pmatrix}
	$. 
	For any  $A \in \mathcal{A}$, denote
	$A = \begin{pmatrix}
		A_{11} & A_{12}\\
	\end{pmatrix}
	$. By the proof of  \cite[Theorem 4.4]{Arxiv2}, if (\ref{i}) or (\ref{ii}) holds, then for any $A_{11}$ in $\mathrm{M}_n(\mathbb{D})$, there exist $B_{11}, C_{11}, D_{11}, E_{11}  \in \mathrm{M}_n(\mathbb{D})$ such that $A_{11} = [B_{11}, C_{11}][D_{11}, E_{11}]$, and such that $ [B_{11}, C_{11}]$ and $D_{11}$ are
	invertible. Thus 
	$$\begin{aligned}
	\begin{pmatrix}
		A_{11} & A_{12}\\
	\end{pmatrix}
	&=[\begin{pmatrix}
		B_{11} & 0\\
	\end{pmatrix}, \begin{pmatrix}
		C_{11} & 0\\
	\end{pmatrix}][\begin{pmatrix}
		D_{11} & 0\\
	\end{pmatrix}, \begin{pmatrix}
		E_{11}  &~~ D_{11}^{-1}[B_{11}, C_{11}]^{-1}A_{12}\\
	\end{pmatrix}]\\
	&\in [\mathcal{A}, \mathcal{A}]^2,
	\end{aligned}
	$$
	which means that $\xi(\mathcal{A}) =1$. 
\end{proof}

\section{zpd  and zJpd Munn algebra}

\begin{definition}\label{defzpd}
	An algebra $\mathcal{A}$ over a field $\mathbb{F}$ is said to be \textit{zero product determined} if for every bilinear functional $\phi : \mathcal{A} \times \mathcal{A} \longrightarrow \mathbb{F}$ with the property that 
	\begin{equation}\label{zp}
		xy=0 \Longrightarrow \phi(x, y)=0 \quad \text{for all}\quad  x, y \in \mathcal{A},
	\end{equation}
	there exists a linear functional  $\tau$ on $\mathcal{A}$ such that 
	$$
	\phi(x, y)=\tau(xy) \quad \text{for all} \quad x, y \in \mathcal{A}.
	$$
	We  use \textit{zpd} as an abbreviation for zero product determined.
	
	If $\mathcal{A}$ is a Banach algebra and $\phi$ and $\tau$ are continuous, then we call $\mathcal{A}$ a \textit{zpd Banach algebra}. For more information, see \cite{MR4311188}.	
\end{definition}

\begin{lemma}\label{danwei} 
	Let $\mathbb{D}$ be a division ring with $\operatorname{char}\mathbb{D} \neq 2, 3$ and $C\in M_n(\mathbb{D})$. 
	Denote the space  consists of all column vectors $x=(x_1, x_2, \cdots, x_n)^T$ with $n$ components in $\mathbb{D}$ by $\mathbb{D}^n$.
	If for any $a, b \in \mathbb{D}^n$, $a^T b=0$ implies $a^T C b=0$, then $C=\lambda I$ for some $\lambda \in \mathbb{D}$.
\end{lemma}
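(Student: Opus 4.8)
The plan is to probe the hypothesis with the standard coordinate vectors $e_1,\dots,e_n$ of $\mathbb{D}^n$ and to read off the entries of $C=(c_{ij})$ one relation at a time, keeping in mind throughout that $\mathbb{D}$ is noncommutative, so scalars may not be transported across a product. First I would determine the off-diagonal entries: for $i\neq j$ the pair $a=e_i$, $b=e_j$ satisfies $a^Tb=e_i^Te_j=0$, so the hypothesis forces $0=a^TCb=e_i^TCe_j=c_{ij}$. Hence $C$ is diagonal, $C=\operatorname{diag}(c_{11},\dots,c_{nn})$.

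Next I would show the diagonal entries agree. For $i\neq j$ take $a=e_i-e_j$ and $b=e_i+e_j$, so that $a^Tb=1-1=0$; since the off-diagonal entries already vanish, a direct computation gives $a^TCb=c_{ii}-c_{jj}$, whence $c_{ii}=c_{jj}$. (When $\operatorname{char}\mathbb{D}=2$ these two vectors coincide, but the single choice $a=b=e_i+e_j$ still yields $a^Ta=0$ and $a^TCa=c_{ii}+c_{jj}=0$, so the conclusion persists.) Setting $\lambda:=c_{11}$, this already proves $C=\lambda I$, which is the assertion of the lemma.

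Finally, although it is not needed for the displayed conclusion, I would record the genuinely noncommutative feature of the situation: the scalar $\lambda$ is in fact forced to be central whenever $n\geq 2$. Fixing $\mu\in\mathbb{D}$ and working in the plane spanned by $e_1,e_2$, the pair $a=\mu e_1+e_2$, $b=e_1-\mu e_2$ has $a^Tb=\mu-\mu=0$, and then $0=a^TCb=\mu\lambda-\lambda\mu$ gives $\lambda\mu=\mu\lambda$ for all $\mu$. The only real obstacle here is bookkeeping the noncommutativity: each step is a one-line verification once the right test vectors are fixed, the subtle point being that one cannot factor $\lambda$ out of $a^TCb$ until centrality is known. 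I note that the argument uses no division by $2$ or $3$, so the hypothesis $\operatorname{char}\mathbb{D}\neq 2,3$ seems to be carried over from the ambient zpd/zJpd results rather than required by this lemma itself.
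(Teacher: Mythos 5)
Your proof is correct, and it takes a genuinely different and in fact more economical route than the paper's. The paper probes the hypothesis with the vectors $a=(1,0,\dots,0,y)^T$, $b=(1,0,\dots,0,-y^{-1})^T$ and then specializes $y$ to $1$, $-1$, and $2$; extracting $C_{1,1}=C_{n,n}$ and $C_{1,n}=C_{n,1}=0$ from the resulting three linear relations requires dividing by $2$ and by $3$, which is exactly why the hypothesis $\operatorname{char}\mathbb{D}\neq 2,3$ appears in the statement. You instead kill each off-diagonal entry in one stroke via $e_i^Te_j=0\Rightarrow c_{ij}=e_i^TCe_j=0$, and then equate diagonal entries with $a=e_i-e_j$, $b=e_i+e_j$ (with the correct fallback $a=b=e_i+e_j$ in characteristic $2$). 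All your computations check out in the noncommutative setting, including the optional observation that $\lambda$ must be central when $n\geq 2$. Your closing remark is accurate and worth emphasizing: your argument establishes the lemma with no restriction on $\operatorname{char}\mathbb{D}$, so the hypothesis $\operatorname{char}\mathbb{D}\neq 2,3$ is an artifact of the paper's choice of test vectors rather than a genuine requirement, and adopting your proof would let the characteristic assumptions in the downstream zpd/zJpd theorems be weakened accordingly (to $\operatorname{char}\mathbb{F}\neq 2$ where Theorem \ref{th4.22} is invoked).
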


\begin{proof}
	Denote $C=(C_{i, j})_{n \times n}$.
	Let $a=(1, 0, \cdots, 0,  y)^T, b=(1, 0, \cdots, 0, -y^{-1})^T$ for any $y \neq 0 \in \mathbb{D}$. Thus we have $a^T C b=C_{1, 1}-C_{1, n}y^{-1}+yC_{n, 1}-yC_{n, n}y^{-1}=0$. 
	
	Replacing $y$ by $1, -1, 2$ respectively, we can obtain that
	$$C_{1, 1}=C_{n, n}\quad \text{and} \quad C_{1, n} = C_{n, 1} = 0.$$

	Similarly, we can obtain $C_{i, i}=C_{j, j}$ and $C_{i, j}=0$ for any  $1 \leq i \neq j \leq n$ which means that $C=\lambda I$ for some $\lambda \in \mathbb{D}$.
\end{proof}

\begin{theorem}
	Let $\mathbb{F}$ be a field with $\operatorname{char}\mathbb{F} \neq 2, 3$. 
	If $2 \leq  m, n < \infty$, and $\operatorname{rank}P=r \geq 2$,  then $\mathcal{A}=\mathfrak{M}(\mathbb{F}, m, n, P)$ is  zpd.
\end{theorem}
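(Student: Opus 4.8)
The plan is to normalise $P$ and then construct $\tau$ explicitly on a basis. By Theorem~\ref{th4.2} together with Lemma~\ref{lemma1.4} I may assume $P=E_r^{n,m}$, so that on the elementary matrices $(1,i,s)$, which form an $\mathbb{F}$-basis of $\mathcal{A}$, the product takes the transparent form
\begin{equation*}
(1,i,s)\bullet(1,j,t)=
\begin{cases}
(1,i,t), & s=j\le r,\\
0, & \text{otherwise.}
\end{cases}
\end{equation*}
Since a bilinear functional is determined by its values on basis pairs, and the zero-product hypothesis already forces $\phi\bigl((1,i,s),(1,j,t)\bigr)=0$ whenever $s\ne j$ or $s>r$, the entire problem reduces to understanding the surviving \emph{diagonal} values $\phi\bigl((1,i,s),(1,s,t)\bigr)$ with $s\le r$. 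Conceptually this is the statement that the kernel of the multiplication map $\mathcal{A}\otimes\mathcal{A}\to\mathcal{A}$ is spanned by elementary tensors of zero product, but I will argue directly.

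The decisive step is where $r\ge 2$ is used. Fixing $i,t$ and two distinct indices $s,s'\le r$, I would observe that
\begin{equation*}
\bigl((1,i,s)-(1,i,s')\bigr)\bullet\bigl((1,s,t)+(1,s',t)\bigr)=0,
\end{equation*}
because the two nonzero diagonal products $(1,i,t)$ occur with opposite signs and cancel, while the mixed products vanish outright. Applying $\phi$ and deleting the cross terms $\phi\bigl((1,i,s),(1,s',t)\bigr)$ and $\phi\bigl((1,i,s'),(1,s,t)\bigr)$, which are zero by the previous paragraph, leaves
\begin{equation*}
\phi\bigl((1,i,s),(1,s,t)\bigr)=\phi\bigl((1,i,s'),(1,s',t)\bigr).
\end{equation*}
Thus $\phi\bigl((1,i,s),(1,s,t)\bigr)$ does not depend on $s\le r$, and I may set $\tau\bigl((1,i,t)\bigr):=\phi\bigl((1,i,s),(1,s,t)\bigr)$ for any such $s$ and extend $\tau$ linearly to $\mathcal{A}$.

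It remains to verify $\phi(x,y)=\tau(x\bullet y)$, and by bilinearity it is enough to test basis pairs $\bigl((1,i,s),(1,j,t)\bigr)$: when $s=j\le r$ both sides equal $\phi\bigl((1,i,s),(1,s,t)\bigr)$ by the definition of $\tau$ with the choice $s'=s$, and in all other cases both sides vanish. This will establish that $\mathcal{A}$ is zpd. I expect the only genuine obstacle to be the well-definedness in the middle step: it is the single place requiring a non-elementary zero product, and it is exactly what breaks down when $r=1$. I note that this argument does not appear to need $\operatorname{char}\mathbb{F}\ne 2,3$; that hypothesis is what drives Lemma~\ref{danwei} and is the natural setting for the companion zero-Jordan-product result, and a more matrix-analytic proof---one that encodes $\phi$ as a coefficient matrix $C$ and forces $C$ to be scalar---would invoke it at that point.
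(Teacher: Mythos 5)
Your proof is correct, and it takes a genuinely different route from the paper's. The paper fixes $i,q$, packages the surviving values $\phi\bigl((1,i,j),(1,p,q)\bigr)$ with $j,p\le r$ into an $r\times r$ coefficient matrix, and invokes Lemma~\ref{danwei} (tested on vectors built from $y=1,-1,2$, which is where $\operatorname{char}\mathbb{F}\neq 2,3$ enters) to force that matrix to be scalar; the scalar $\lambda^{iq}$ then defines $\tau$. You instead use the single non-elementary zero product $\bigl((1,i,s)-(1,i,s')\bigr)\bullet\bigl((1,s,t)+(1,s',t)\bigr)=0$ for distinct $s,s'\le r$ --- the classical Bre\v{s}ar--Gra\v{s}i\v{c}--Ortega identity adapted to the sandwich product --- to show directly that the diagonal values $\phi\bigl((1,i,s),(1,s,t)\bigr)$ are independent of $s\le r$, which is exactly the well-definedness of $\tau$ on the basis; the verification $\phi(x,y)=\tau(x\bullet y)$ then reduces to basis pairs by bilinearity. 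Both arguments rest on the same normalisation $P=E_r^{n,m}$ via Theorem~\ref{th4.2} and Lemma~\ref{lemma1.4} and on $r\ge 2$, but yours is more elementary and, as you observe, dispenses with the hypothesis $\operatorname{char}\mathbb{F}\neq 2,3$ entirely, so it proves a slightly stronger statement (the characteristic restriction is genuinely used only in the Lemma~\ref{danwei}-based machinery and in the zero Jordan product result). One could add that your consistency identity is in effect a constructive replacement for the scalarity conclusion of Lemma~\ref{danwei} in this special situation, where the test vectors available are not arbitrary but come from rows and columns of elements of $\mathcal{A}$.
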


\begin{proof}
	Take a bilinear functional $\phi : \mathcal{A} \times \mathcal{A} \longrightarrow \mathbb{F}$ satisfying \eqref{zp}.  
	For any $1\leq i, p \leq m$ and  $1 \leq j, q \leq n$, 
	we denote $\phi((1, i, j), (1, p, q))=\lambda^{iq}_{jp}$.
	
	If $r< j \leq n$ or $r< p \leq m$, then $(1, i, j) (1, p, q)=0$. Thus by the assumption of $\phi$, we have $\lambda^{iq}_{jp}=0$ when $r< j \leq n$ or $r< p \leq m$.
	
	For any given $1 \leq i \leq m$ and  $1\leq q \leq n$, let 
	$$C=\sum_{1 \leq j \leq n}(c_{ij}, i, j)\quad \text{and}\quad D=\sum_{1 \leq q\leq m}(d_{pq}, p, q),
	$$
	where $c_{ij}$ and $d_{pq}$ are arbitrary elements in $\mathbb{F}$.
	
	Thus we have
	$$	CD=(\sum_{1 \leq k \leq r}c_{ik}d_{kq}, i, q)
	$$
	and 
	$$		\phi(C, D)= \sum_{\substack{ 1 \leq j \leq n\\ 1 \leq p \leq m}}\phi((c_{ij}, i, j), (d_{pq}, p, q))
	=\sum_{\substack{ 1 \leq j \leq r\\ 1 \leq p \leq r}}c_{ij}d_{pq}\lambda^{iq}_{jp}.
	$$	
	It follows from \eqref{zp} that $\sum_{1 \leq k \leq r} c_{ik} d_{kq}=0$ implies $\sum_{ 1 \leq j \leq r,  1 \leq p \leq r} c_{ij} d_{pq} \lambda^{iq}_{jp}=0$.
	
	Regard $(\lambda_{jp}^{iq})_{1 \leq j, p \leq r}$ as an $r \times r$ matrix. 
	Thus by Lemma \ref{danwei}, we have $(\lambda_{jp}^{iq})_{1 \leq j, p \leq r}=\lambda^{iq} I$ for some  $\lambda^{iq} \in \mathbb{F}$, which means that $\lambda_{jj}^{iq}
	=\lambda^{iq}$  and $\lambda_{jp}^{iq}=0$ for any $1 \leq j \neq p \leq r$.  
	
	Then we can define a  linear functional $\tau$ on $\mathcal{A}$ by
	$$\tau(\sum_{\substack{ 1 \leq i \leq m \\ 1 \leq q \leq n}}(a_{iq}, i, q)):=\sum_{\substack{r+1 \leq i \leq m \\ r+1 \leq q \leq n}}a_{iq}\lambda^{iq}.
	$$
	Hence for any $A=\sum\limits_{\substack{ 1 \leq i \leq m \\ 1 \leq j \leq n}}(a_{ij}, i, j),   B=\sum\limits_{\substack{ 1 \leq p \leq m \\ 1 \leq q \leq n}}(b_{pq}, p, q) \in \mathcal{A}$, 
	we have
	$$
	\begin{aligned}
		\phi(A, B)&=\phi(\sum\limits_{\substack{ 1 \leq i \leq m \\ 1 \leq j \leq n}}(a_{ij}, i, j), \sum\limits_{\substack{ 1 \leq p \leq m \\ 1 \leq q \leq n}}(b_{pq}, p, q))=\sum\limits_{\substack{ 1 \leq i \leq m \\ 1 \leq j \leq n}}\sum\limits_{\substack{ 1 \leq p \leq m \\ 1 \leq q \leq n}}\phi((a_{ij}, i, j), (b_{pq}, p, q))\\
		&=\sum\limits_{\substack{ 1 \leq i \leq m \\ 1 \leq j \leq n}}\sum\limits_{\substack{ 1 \leq p \leq m \\ 1 \leq q \leq n}}a_{ij}b_{pq}\lambda^{iq}_{jp}=\sum\limits_{\substack{ 1 \leq i \leq m \\  1 \leq q \leq n}}\sum\limits_{ 1 \leq k \leq r}a_{ik}b_{kq}\lambda^{iq}\\
		&=\tau(\sum\limits_{\substack{ 1 \leq i \leq m \\  1 \leq q \leq n}}(\sum\limits_{ 1 \leq k \leq r}a_{ik}b_{kq}, i, q))=\tau(AB).
	\end{aligned}
	$$
	It follows that $\mathcal{A}$ is zero product determined.	
\end{proof}

Every associative algebra $\mathcal{A}$ becomes a \textit{Jordan algebra}, denoted $\mathcal{A}^+$, if we replace the
original product by the Jordan product. 
An associative algebra $\mathcal{A}$ is said to be \textit{zero Jordan product determined}, if the Jordan
algebra $\mathcal{A}^+$ is zero product determined. 
We  use \textit{zJpd} as an abbreviation for zero Jordan product determined.

In \cite{MR3325219},  An,  Li, and  He 
proved the following theorem.

\begin{theorem}\cite[Theorem 2.1]{MR3325219}\label{th4.22}
	Let $\mathcal{A}$ be an (associative) unital algebra over a field $\mathbb{F}$ of characteristic
	not 2. If $\mathcal{A}$ is generated by idempotents, then $\mathcal{A}$ is zJpd.
\end{theorem}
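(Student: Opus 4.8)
The plan is to verify the definition of zero product determinacy directly for the Jordan algebra $\mathcal{A}^{+}$, using the Peirce decomposition associated with the generating idempotents together with $\operatorname{char}\mathbb{F}\neq 2$. Let $\phi:\mathcal{A}\times\mathcal{A}\to\mathbb{F}$ be a bilinear functional such that $x\circ y=0$ implies $\phi(x,y)=0$. Since the desired identity $\phi(x,y)=\tau(x\circ y)$ forces $\tau(x)=\tfrac12\phi(x,1)$ (take $y=1$ and use $x\circ 1=2x$), I would \emph{define} $\tau(x):=\tfrac12\phi(x,1)$ at the outset and set $\Delta(x,y):=\phi(x,y)-\tau(x\circ y)$. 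The whole problem then reduces to showing that the bilinear form $\Delta$ vanishes identically; by bilinearity it suffices to check $\Delta(u,v)=0$ on a spanning family, and the generation hypothesis furnishes such a family through the idempotents.

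The engine of the argument is the family of Jordan-orthogonality relations supplied by an idempotent $e$ and its complement $f=1-e$. First, $e\circ f=0$, which already yields $\phi(e,f)=0$ and hence $\phi(e,1)=\phi(e,e)$. More importantly, decomposing an arbitrary $a\in\mathcal{A}$ along the Peirce components $eae,\;eaf,\;fae,\;faf$, one computes that $eae$ is Jordan-orthogonal to $f$, that $faf$ is Jordan-orthogonal to $e$, and that each off-diagonal component satisfies $(eaf)\circ(2e-1)=0$ and $(fae)\circ(2e-1)=0$. Feeding these into $\phi$ produces, for every $a$, relations such as $\phi(eae,1)=\phi(eae,e)$ and $2\phi(eaf,e)=\phi(eaf,1)$. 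I would package all of these into the single assertion that $\Delta$ vanishes whenever one argument is a generating idempotent and the other ranges over its Peirce components, repeatedly using $\operatorname{char}\mathbb{F}\neq 2$ to invert the factor $2$ that pervades these computations.

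Because $\mathcal{A}$ is generated by idempotents, every element is a finite sum of words $e_{1}\cdots e_{k}$ in idempotents, so $\mathcal{A}\otimes_{\mathbb{F}}\mathcal{A}$ is spanned by tensors of such words. Equivalently, in the tensor formulation the goal is to show that $\ker\mu_{\circ}$, where $\mu_{\circ}\colon\mathcal{A}\otimes_{\mathbb{F}}\mathcal{A}\to\mathcal{A}$ is the Jordan multiplication $x\otimes y\mapsto x\circ y$, is spanned by the pure tensors $u\otimes v$ with $u\circ v=0$. The Peirce relations above exhibit a large supply of such pure tensors, and I would propagate the vanishing of $\Delta$ from the generating idempotents to arbitrary products of them by induction on word length, so that $\Delta=0$ on all of $\mathcal{A}\times\mathcal{A}$ and therefore $\phi(x,y)=\tau(x\circ y)$.

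The hard part will be precisely this inductive step from idempotent generators to arbitrary products of them. A Jordan product $x\circ y$ only records the symmetric combination $xy+yx$, whereas the generation hypothesis is phrased through the ordinary associative product, so the commutator discrepancy $xy-\tfrac12(x\circ y)=\tfrac12[x,y]$ must be shown not to contribute to $\Delta$; equivalently, the antisymmetric part of $\phi$ must be forced to vanish on products of idempotents. Controlling this discrepancy is the technical crux, and it is exactly here that one must run the Peirce calculus locally at each generating idempotent rather than attempt a single global computation.
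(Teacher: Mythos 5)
First, a remark on what you are being compared against: the paper does not prove this statement at all --- it is imported verbatim from An, Li and He \cite{MR3325219}, so the only benchmark is that external proof. Your opening moves do coincide with the standard opening of that argument: the normalization $\tau(x)=\tfrac12\phi(x,1)$ is indeed forced, and the Peirce orthogonalities you list, $eae\circ(1-e)=0$, $(1-e)a(1-e)\circ e=0$ and $ea(1-e)\circ(2e-1)=(1-e)ae\circ(2e-1)=0$, are all correct; assembled, they give exactly $\phi(x,e)=\tau(x\circ e)$ for every $x$ and every idempotent $e$, i.e.\ $\Delta(x,y)=0$ whenever $y$ lies in the \emph{linear span} of the idempotents. (Your tensor reformulation via $\ker\mu_{\circ}$ is also legitimate for functionals, though it does not by itself advance the argument.)

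The genuine gap is the one you flag yourself: the passage from idempotents to associative words $e_{1}e_{2}\cdots e_{k}$ is not a technical afterthought but the entire content of the theorem, and your proposal supplies no mechanism for it. The one-variable Peirce relations you derive only control $\phi(\cdot,z)$ for $z$ in the \emph{Jordan} subalgebra generated by the idempotents --- note that $eae=\tfrac12\bigl(e\circ(e\circ a)-e\circ a\bigr)$ is reachable this way, but $ea$ itself is not, because the Jordan product cannot detect $[e,a]$ --- whereas the hypothesis only gives generation with respect to the \emph{associative} product. An ``induction on word length'' needs a verified inductive step of the form: if $\Delta(x,a)=0$ for all $x$, then $\Delta(x,ea)=0$ for all $x$ and each generating idempotent $e$; nothing in your list implies this. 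Closing the gap requires further orthogonality relations that couple the Peirce components of \emph{both} arguments simultaneously --- for instance, with $f=1-e$, the identities $exe\circ fyf=0$, $exf\circ eyf=0$ and $fxe\circ fye=0$ yield $\phi(exe,fyf)=\phi(exf,eyf)=\phi(fxe,fye)=0$ --- and these must be combined to show that the set $S=\{a\in\mathcal{A}:\phi(x,a)=\tau(x\circ a)\text{ for all }x\}$ is stable under multiplication by the generating idempotents; this is also where the vanishing of the antisymmetric part of $\phi$ on such pairs gets forced. Until that step is written out, your argument establishes only the easy base case.
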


\begin{theorem}	
	Let $\mathbb{F}$ be a field with $\operatorname{char}\mathbb{F} \neq 2, 3$ and $P$ be an $n \times m$ matrix over $\mathbb{F}$ with $\operatorname{rank} P= r$.  
	If $2 \leq  m, n < \infty$ and $r \geq 2$,  then $\mathcal{A}=\mathfrak{M}(\mathbb{F}, m, n, P)$ is  zero Jordan product determined. 
\end{theorem}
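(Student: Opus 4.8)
The plan is to mirror the proof that $\mathcal A$ is zpd, replacing the associative product by the Jordan product and the role of Lemma \ref{danwei} by a Jordan analogue. By Theorem \ref{th4.2} and Lemma \ref{lemma1.4} I may assume $P=E_r^{n,m}$, so that on the basis $e_{ij}:=(1,i,j)$ (with $1\le i\le m$, $1\le j\le n$) the product is $e_{ij}\bullet e_{pq}=e_{iq}$ when $j=p\le r$ and $0$ otherwise, whence
$$e_{ij}\circ e_{pq}=[\,j=p\le r\,]\,e_{iq}+[\,i=q\le r\,]\,e_{pj}.$$
Given a bilinear $\phi:\mathcal A\times\mathcal A\to\mathbb F$ vanishing on Jordan--zero pairs, I set $\lambda^{iq}_{jp}:=\phi(e_{ij},e_{pq})$ and aim to produce scalars $\tau_{iq}$ such that
$$\lambda^{iq}_{jp}=[\,j=p\le r\,]\tau_{iq}+[\,i=q\le r\,]\tau_{pj}\qquad(\star)$$
for all $i,j,p,q$. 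The functional $\tau(e_{iq}):=\tau_{iq}$ then satisfies $\phi(x,y)=\tau(x\circ y)$, because the $(i,q)$-entry of $x\circ y$ equals $\sum_{k\le r}(a_{ik}b_{kq}+b_{ik}a_{kq})$, which is exactly what $(\star)$ encodes; since the Jordan products span $\mathcal A$ by Theorem \ref{th4.3}, such a $\tau$ is unique once it exists.

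First I would read off the degenerate relations from single basis pairs: if $e_{ij}\circ e_{pq}=0$, i.e. $j\ne p$ or $j>r$ \emph{and} simultaneously $i\ne q$ or $i>r$, then $\lambda^{iq}_{jp}=0$, matching $(\star)$. To obtain the nontrivial identities I would feed $\phi$ carefully chosen Jordan--zero combinations. For example, fixing $i\ne q$ and $j\ne j'\le r$, the pair $x=e_{ij}+e_{ij'}$, $y=e_{jq}-e_{j'q}$ satisfies $x\circ y=e_{iq}-e_{iq}=0$, which forces $\lambda^{iq}_{jj}=\lambda^{iq}_{j'j'}$; this says precisely that in the ``first term only'' regime $\lambda^{iq}_{jp}$ is independent of the inner index and may be named $\tau_{iq}$, and the transposed construction handles the ``second term only'' regime. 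The diagonal values $\tau_{ss}$ for $s\le r$ would be pinned using the single idempotent $e_{ss}$, where $e_{ss}\circ e_{ss}=2e_{ss}$.

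The main obstacle is the \emph{doubly active} region, where $j=p\le r$ and $i=q\le r$ hold at once: there both Jordan terms contribute, so the diagonal entries receive $\tau_{ss}+\tau_{tt}$ and the off-diagonal entries couple the row- and column-scalars. Disentangling these into one well-defined linear $\tau$, and checking that the value of $\tau_{ss}$ forced by the diagonal agrees with the values forced by the two single-term regimes, leads to small linear systems with coefficients $2$ and $3$, solvable exactly because $\operatorname{char}\mathbb F\neq 2,3$; this is where a Jordan analogue of Lemma \ref{danwei} enters, and I expect it to absorb most of the work. Once $(\star)$ holds on every index pattern, the verification $\phi(A,B)=\tau(A\circ B)$ is a bookkeeping computation in the spirit of the final display of the zpd proof.

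Alternatively, one can try to offload the combinatorics onto the unitisation $\mathcal A^{1}=\mathbb F 1\oplus\mathcal A$: by Theorem \ref{th4.3} the idempotents of $\mathcal A$ generate it as a ring, so together with $1$ they generate $\mathcal A^{1}$ as an algebra, and Theorem \ref{th4.22} then shows $\mathcal A^{1}$ is zJpd. It would remain to extend a Jordan-orthogonal $\phi$ on $\mathcal A$ to a Jordan-orthogonal $\hat\phi$ on $\mathcal A^{1}$, apply zJpd of $\mathcal A^{1}$ to obtain $\hat\tau$, and restrict to $\mathcal A$. The consistency of such an extension rests on the remark that if $x\circ y=-2\beta x$ and $x\circ y'=-2\beta' x$ then $x\circ(\beta' y-\beta y')=0$, forcing $\beta'\phi(x,y)=\beta\phi(x,y')$; but assembling these constraints into a genuine linear functional turns out to be of the same difficulty as the direct argument, so I would present the direct computation as the main line and treat the unitisation route only as a guiding heuristic.
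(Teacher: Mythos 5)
Your overall frame (reduce to $P=E_r^{n,m}$, set $\lambda^{iq}_{jp}=\phi((1,i,j),(1,p,q))$, and aim for the structural identity $(\star)$) is sound, and $(\star)$ is indeed equivalent to $\phi(x,y)=\tau(x\circ y)$. But there is a genuine gap exactly where you predict ``most of the work'' will be: the doubly active region $i=q\le r$, $j=p\le r$, i.e.\ the top-left $r\times r$ block. Establishing $(\star)$ there amounts to proving that $\mathrm{M}_r(\mathbb{F})$ is zero Jordan product determined, which is not a matter of ``small linear systems with coefficients $2$ and $3$'' solvable by inspection --- it is a substantive theorem in its own right. The paper does not prove it by hand either: it observes that the elements supported on that block form a \emph{unital} subalgebra $\mathcal{B}\cong\mathrm{M}_r(\mathbb{F})$ of $\mathcal{A}$, that the restriction of $\phi$ to $\mathcal{B}\times\mathcal{B}$ automatically still annihilates Jordan-zero pairs (restriction to a subalgebra costs nothing, unlike the extension to the unitisation $\mathcal{A}^1$ that you considered and rightly set aside), and then invokes Theorem \ref{th4.22} to produce a functional $\tau_1$ on $\mathcal{B}$. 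This is the move you are missing: restrict to a unital, idempotent-generated subalgebra rather than extend to $\mathcal{A}^1$, so no consistency or extension problem ever arises.

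Two further concrete issues. First, your test pair $x=e_{ij}+e_{ij'}$, $y=e_{jq}-e_{j'q}$ only yields the four-term relation $\lambda^{iq}_{jj}-\lambda^{iq}_{jj'}+\lambda^{iq}_{j'j}-\lambda^{iq}_{j'j'}=0$; by itself this does not force $\lambda^{iq}_{jj}=\lambda^{iq}_{j'j'}$ --- you need additional pairs to kill the cross terms, which is precisely what Lemma \ref{danwei} packages (the paper applies it, for each fixed $(i,q)$ with $i>r$ or $q>r$, to conclude $(\lambda^{iq}_{jp})_{1\le j,p\le r}=\lambda^{iq}I_r$, and the transposed statement for the other regime). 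Second, even after the block and the two single-term regimes are handled separately, the scalar $\lambda^{iq}$ from the ``first term only'' regime must be matched with the scalar $\lambda_{pj}$ from the ``second term only'' regime (in the paper's notation, $\lambda^{ij}=\lambda_{ji}$ whenever $i>r$ or $j>r$); this boundary consistency requires further hand-picked Jordan-zero pairs (the paper's $E,F,E',F',G,H$) and is not addressed in your outline. As it stands the proposal is a reasonable plan rather than a proof; to complete it along the paper's lines, restrict $\phi$ to $\mathcal{B}$, cite Theorem \ref{th4.22} there, use Lemma \ref{danwei} outside the block, and then carry out the matching of the two families of scalars explicitly.
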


\begin{proof}
	Let  $\phi : \mathcal{A} \times \mathcal{A} \longrightarrow \mathbb{F}$ be a bilinear functional satisfying 
	\begin{equation} \label{eqzjpd}
		X \circ Y=0 \Longrightarrow \phi(X, Y)=0.
	\end{equation}
	For any $1\leq i, p \leq m$ and  $1 \leq j, q \leq n$, 
	denote 
	$	\phi((1, i, j), (1, p, q))=\lambda^{iq}_{jp}$. 
	
	If ($r< j \leq n$ or $r< p \leq m$) and ($r< q \leq n$ or $r< i \leq m$), then $(1, i, j) \circ (1, p, q)=0$ and 
	\begin{equation}\label{eq2.12}
		\lambda^{iq}_{jp}=0.
	\end{equation}

	For any  $1 \leq i \leq m$ and  $1\leq q \leq n$, let 
	$$C=\sum_{1 \leq j \leq n}(c_{ij}, i, j)\quad \text{and}\quad D=\sum_{1 \leq q\leq m}(d_{pq}, p, q),
	$$
	where $c_{ij}$ and $d_{pq}$ are arbitrary elements in $\mathbb{F}$. 
	
	When $r < i \leq m$ or $r < q \leq n$,  we have
	$$CD=(\sum_{1 \leq k \leq r}c_{ik}d_{kq}, i, q) \quad \text{and} \quad C \circ D=(\sum_{1 \leq k \leq r}c_{ik}d_{kq}, i, q).
	$$
	Using \eqref{eq2.12},  we can obtain that
	$$		\phi(C, D)= \sum\nolimits_{\substack{ 1 \leq j \leq n\\ 1 \leq p \leq m}}\phi((c_{ij}, i, j), (d_{pq}, p, q))
	=\sum\nolimits_{\substack{ 1 \leq j \leq r\\ 1 \leq p \leq r}}c_{ij}d_{pq}\lambda^{iq}_{jp}.	
	$$
	Since $C \circ D=0$ implies $\phi(C, D)=0$, it follows from Lemma \ref{danwei}  that 
	\begin{equation}\label{iq}
		(\lambda_{jp}^{iq})_{1 \leq j, p \leq r}=\lambda^{iq} I_r
	\end{equation}
	for some  $\lambda^{iq} \in \mathbb{F}$.
	
	Similarly, when	 $r < p \leq m$ or $r < j \leq n$, we can obtain that  \begin{equation}\label{jp}
		(\lambda_{jp}^{iq})_{1 \leq i, q \leq r}=\lambda_{jp} I_r
	\end{equation} for some  $\lambda_{jp} \in \mathbb{F}$.

	Let $\mathcal{B}$ be a subalgebra of $\mathcal{A}$ consisting of all elements of the form \\ $A=\sum_{\substack{ 1 \leq i \leq r, 1 \leq j \leq r}}(a_{ij}, i, j)$, where 	$a_{ij}\in \mathbb{F}$.
	Then the projection $\eta : \mathcal{A} \longrightarrow \mathcal{B}$  given by $$\sum\limits_{\substack{ 1 \leq i \leq m \\ 1 \leq j \leq n}}(a_{ij}, i, j) \longmapsto \sum\limits_{\substack{ 1 \leq i \leq r \\ 1 \leq j \leq r}}(a_{ij}, i, j)$$
	is an algebraic epimorphism.
	And the injection $\iota : \mathcal{B} \longrightarrow \mathcal{A}$  given by 
	$X \longmapsto X$
	is an algebraic monomorphism.
	
	It is easy  to see that $\mathcal{B}$ is  algebraically isomorphic to $\mathrm{M}_r(\mathbb{F})$. It follows from Theorem \ref{th4.22} that $\mathcal{B}$ is zJpd. 	
	Define a bilinear mapping $\phi_1: \mathcal{B} \times \mathcal{B} \rightarrow \mathbb{F}$ by
	$$
	\phi_1(X, Y) := \phi(\iota(X), \iota(Y))
	$$
	for any $X, Y \in \mathcal{B}$. 
	Since $\iota$ is a homomorphism, $\phi_1$ satisfies the  condition \eqref{eqzjpd}.  Therefore, there exists a linear mapping $\tau_1: \mathcal{B} \longrightarrow \mathbb{F}$ such that  $\phi_1(X, Y) = \tau_1(X \circ Y)$ for any $X, Y \in \mathcal{B}$.	
	
	Now we define a linear functional $\tau :  \mathcal{A} \longrightarrow \mathbb{F}$ by 
	$$\tau(\sum_{\substack{ 1 \leq i \leq m \\ 1 \leq j \leq n}}(a_{ij}, i, j) ) :=\tau_1( \sum_{\substack{ 1 \leq i \leq r \\ 1 \leq j \leq r}}(a_{ij}, i, j))+\sum_{\substack{ r  < i \leq m \\ \text{or~} r < j \leq n}}a_{ij}\lambda^{ij}.
	$$
	
	For any $A=\sum\nolimits_{\substack{ 1 \leq i \leq m, 1 \leq j \leq n}}(a_{ij}, i, j)$ and  $B=\sum\nolimits_{\substack{ 1 \leq p \leq m, 1 \leq q \leq n}}(b_{pq}, p, q)$ in $\mathcal{A}$, we have
	$$
	\begin{aligned}
		A \circ B
		=&(\sum_{\substack{ 1 \leq i \leq r \\ 1 \leq j \leq r}}(a_{ij}, i, j)+\sum_{\substack{ r  < i \leq m \\ \text{or~} r < j \leq n}}(a_{ij}, i, j)) \circ (\sum_{\substack{ 1 \leq p \leq r \\ 1 \leq q \leq r}}(b_{pq}, p, q)+\sum_{\substack{ r  < p \leq m \\ \text{or~} r < q \leq n}}(b_{pq}, p, q))\\
		=&\sum_{\substack{ 1 \leq i \leq r \\ 1 \leq j \leq r}}(a_{ij}, i, j) \circ \sum_{\substack{ 1 \leq p \leq r \\ 1 \leq q \leq r}}(b_{pq}, p, q)+\sum_{\substack{r  < i \leq m\\
				\text{or~} r < j \leq n\\
				\text{or~} r  < p \leq m\\
				\text{or~} r < q \leq n}}(a_{ij}, i, j) \circ (b_{pq}, p, q)\\
		=&\sum_{\substack{ 1 \leq i \leq r \\ 1 \leq j \leq r}}(a_{ij}, i, j) \circ \sum_{\substack{ 1 \leq p \leq r \\ 1 \leq q \leq r}}(b_{pq}, p, q)+\sum_{\substack{ r  < i \leq m \\ \text{or~} r < q \leq n}}(\sum_{1 \leq k \leq r}a_{ik}b_{kq}, i, q)\\
		&+\sum_{\substack{ r  < p \leq m\\ \text{or~} r < j \leq n}}(\sum_{1 \leq k \leq r}b_{pk}a_{kj}, p, j).
	\end{aligned}
	$$
	Thus we can obtain that
	\begin{equation}\label{eq2.38}
		\begin{aligned}
			\tau(A \circ B)=&\tau_1(\sum_{\substack{ 1 \leq i \leq r \\ 1 \leq j \leq r}}(a_{ij}, i, j) \circ \sum_{\substack{ 1 \leq p \leq r \\ 1 \leq q \leq r}}(b_{pq}, p, q))+\sum_{\substack{ r  < i \leq m \\ \text{or~} r < q \leq n}}\sum_{1 \leq k \leq r}a_{ik}b_{kq}\lambda^{iq}\\
			&+\sum_{\substack{ r  < p \leq m\\ \text{or~} r < j \leq n}}\sum_{1 \leq k \leq r}b_{pk}a_{kj}\lambda^{pj},
		\end{aligned}
	\end{equation}		
	and	
	\begin{equation}\label{eq2.39}
		\begin{aligned}
			&\phi(A, B)\\
			=&\phi(\sum_{\substack{ 1 \leq i \leq r \\ 1 \leq j \leq r}}(a_{ij}, i, j)+\sum_{\substack{ r  < i \leq m \\ \text{or~} r < j \leq n}}(a_{ij}, i, j), \sum_{\substack{ 1 \leq p \leq r \\ 1 \leq q \leq r}}(b_{pq}, p, q)+\sum_{\substack{ r  < p \leq m \\ \text{or~} r < q \leq n}}(b_{pq}, p, q))\\
			=&\phi_1(\eta(\sum_{\substack{ 1 \leq i \leq r \\ 1 \leq j \leq r}}(a_{ij}, i, j)), \eta(\sum_{\substack{ 1 \leq p \leq r \\ 1 \leq q \leq r}}(b_{pq}, p, q)) )+\sum_{\substack{r  < i \leq m\\
					\text{or~} r < j \leq n\\
					\text{or~} r  < p \leq m\\
					\text{or~} r < q \leq n}}\phi((a_{ij}, i, j), (b_{pq}, p, q))\\
			=&\tau_1(\sum_{\substack{ 1 \leq i \leq r \\ 1 \leq j \leq r}}(a_{ij}, i, j) \circ \sum_{\substack{ 1 \leq p \leq r \\ 1 \leq q \leq r}}(b_{pq}, p, q))
			+\sum_{\substack{r  < i \leq m\\
					\text{or~} r < j \leq n\\
					\text{or~} r  < p \leq m\\
					\text{or~} r < q \leq n}}a_{ij}b_{pq}\lambda^{iq}_{jp}.
		\end{aligned}
	\end{equation}

	It follows from \eqref{eq2.12} and \eqref{iq} that 
	\begin{equation}\label{eq2.40}
		\begin{aligned}	
			\sum_{\substack{r  < i \leq m\\
					\text{or~} r < j \leq n\\
					\text{or~} r  < p \leq m\\
					\text{or~} r < q \leq n}}a_{ij}b_{pq}\lambda^{iq}_{jp}=&	\sum_{\substack{ r < j \leq n\\
					\text{or~} r  < p \leq m}}\sum_{\substack{1  \leq i \leq r\\
					1 \leq q \leq r}} a_{ij}b_{pq}\lambda^{iq}_{jp}+
			\sum_{\substack{ r < q \leq n\\
					\text{or~} r  < i \leq m}}\sum_{\substack{1  \leq j \leq r\\
					1 \leq p \leq r}} a_{ij}b_{pq}\lambda^{iq}_{jp}\\
			=&\sum_{\substack{ r < j \leq n\\
					\text{or~} r  < p \leq m}}\sum_{1\leq k \leq r} a_{kj}b_{pk}\lambda_{jp}
			+
			\sum_{\substack{ r < q \leq n\\
					\text{or~} r  < i \leq m}}\sum_{1 \leq k \leq r} a_{ik}b_{kq}\lambda^{iq}.
		\end{aligned}
	\end{equation}
	
	Combing \eqref{eq2.38}, \eqref{eq2.39} and \eqref{eq2.40}, it is sufficient for us to prove that 
	$ \lambda^{ij}=\lambda_{ji}
	$
	when $r  < i \leq m$ or $r < j \leq n$.
	
	For any  $1 \leq i \neq k \leq  r$ and $r < j \leq n$, let 
	\begin{eqnarray*}	
		&E=(1, k, i)+(1, i, k)+(1, k, j), \quad &F=(-1, k, i)+(1, i, k)+(-1, k, j),\\
		&E^\prime=(1, k, i)+(1, i, k), \quad\quad\quad\quad\quad &F^\prime=(-1, k, i)+(1, i, k).
	\end{eqnarray*}

	Thus  $E \circ F=0$ and $E^\prime \circ F^\prime=0$. Then by \eqref{eqzjpd}, we have $\phi(E, F)=\phi(E^\prime, F^\prime)=0$.
	It follows that 
	\begin{equation}\label{eq2.42}
		-\lambda^{ki}_{jk}+\lambda^{kk}_{ji}-\lambda^{kj}_{jk}-\lambda^{kj}_{ik}-\lambda^{ij}_{kk}=0.
	\end{equation}
	By \eqref{iq},  \eqref{jp} and \eqref{eq2.42}, we can obtain $ \lambda^{ij}=\lambda_{ji}$ when $1 \leq i  \leq  r$ and $r < j \leq n$.
	
	Similarly, we have $ \lambda^{ij}=\lambda_{ji}$ when $1 \leq j  \leq  r$ and $r < i \leq n$.
	
	For any  $r < i \leq  m$, $r< j \leq n$ and $1 \leq k \leq r$, let 
	$$
	G=(1, i, k)+(-1, k, j) \quad \text{and} \quad
	H=(1, i, k)+(1, k, j).
	$$
	Thus  $G \circ H=0$ and $\phi(G, H)=0$.
	It follows that 
	\begin{equation}\label{eq2.45}
		\lambda^{ik}_{ki}-\lambda^{kk}_{ji}+\lambda^{ij}_{kk}-\lambda^{kj}_{jk}=0.
	\end{equation}
	By \eqref{iq},  \eqref{jp} and \eqref{eq2.45}, we can obtain $ \lambda^{ij}=\lambda_{ji}$ when $r < i \leq m$ and $r < j \leq n$. 
	In  conclusion, $ \lambda^{ij}=\lambda_{ji}
	$
	when $r  < i \leq m$ or $r < j \leq n$. Thus we have $\phi(A, B)=\tau(A \circ B)$ for all $A, B \in \mathcal{A}$, which means that $\mathcal{A}$ is zJpd.
\end{proof}

\begin{remark}	
	Let $R$ be a unital Banach algebra, $I$ and $\Lambda$ be arbitrary index sets, and the sandwich matrix $P$ be a non zero $\Lambda \times I$
	matrix over $R$ such that $\|P\|_{\infty}=\sup \left\{\left\|p_{m i}\right\|: m \in \Lambda, i \in I\right\} \leq 1$. 
	The vector space $\mathfrak{M}(R, P, I, \Lambda)$ of all $I \times \Lambda$ matrices over $R$  with $\ell_1$-norm $\|(a_{i m})\|=\sum_{i \in I, m \in \Lambda}\|a_{i m}\|<\infty$ and product $A \bullet B=A P B$
	is a Banach algebra that called the \textit{$\ell_1$-Munn Banach algebra}, see \cite{MR1674619, MR2650729}.
\end{remark}

Using \cite[Theorem 4.1]{MR4215991} and \cite[Theorem 5.14]{MR4311188}, it is easy to prove the following theorem.

\begin{theorem}
	Let $\mathbb{F}$ be a field of real or complex numbers, $I$ and $\Lambda$ be arbitrary index sets,  $P$ be a non zero $\Lambda \times I$
	matrix over $\mathbb{F}$ such that $\|P\|_{\infty}=\sup \left\{\left\|p_{m i}\right\|: m \in \Lambda, i \in I\right\} \leq 1$. If $I$ or $\Lambda$ is finite and $P$ is invertible over 
	the group of all invertible elements of $R$, then $\mathfrak{M}(R, P, I, \Lambda)$ is a  zpd Banach algebra.
\end{theorem}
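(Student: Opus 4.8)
The plan is to strip away the sandwich matrix by passing to an ordinary matrix algebra and then to quote the two cited results. First I would record what invertibility of $P$ costs: an invertible $\Lambda \times I$ matrix forces $|\Lambda| = |I|$, so the hypothesis that one of $I$, $\Lambda$ is finite already makes both finite, say $|I| = |\Lambda| = n$. Hence $P$ is an invertible $n \times n$ matrix over $R$ and $\mathfrak{M}(R, P, I, \Lambda)$ is, as a Banach space, just the space of $n \times n$ matrices over $R$ with the $\ell_1$-norm, carrying the sandwich product $A \bullet B = APB$.

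Next I would transport the sandwich product to the usual matrix product by the Banach-algebra analogue of Lemma \ref{lemma1.4}. Concretely, since $P$ is invertible I would take $\Phi(A) = PA$ as a map from $\mathfrak{M}(R, P, I, \Lambda)$ onto $\mathrm{M}_n(R)$ with its ordinary product: a one-line computation gives $\Phi(A \bullet B) = P(APB) = (PA)(PB) = \Phi(A)\Phi(B)$, so $\Phi$ is an algebra isomorphism with inverse $X \mapsto P^{-1}X$. The normalisation $\|P\|_\infty \le 1$ is exactly what makes the sandwich product submultiplicative for the $\ell_1$-norm (so that $\mathfrak{M}(R, P, I, \Lambda)$ is genuinely a Banach algebra) and what bounds $\Phi$, while invertibility of $P$ makes $\Phi^{-1}$ bounded through the finite quantity $\|P^{-1}\|$; thus $\Phi$ is a \emph{topological} isomorphism. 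This identification is precisely the content I would import from \cite[Theorem 4.1]{MR4215991}.

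It then remains to know that $\mathrm{M}_n(R)$ is a zpd Banach algebra. For $R$ equal to the scalar field $\mathbb{F}$ (real or complex), $\mathrm{M}_n(\mathbb{F})$ is a finite-dimensional algebra, a $C^*$-algebra when $\mathbb{F} = \mathbb{C}$, and \cite[Theorem 5.14]{MR4311188} asserts exactly that such matrix algebras are zpd Banach algebras. Finally I would note that being a zpd Banach algebra is invariant under topological isomorphism: given a continuous bilinear $\phi$ on $\mathfrak{M}(R, P, I, \Lambda)$ that vanishes on zero products, the functional $(X, Y) \mapsto \phi(\Phi^{-1}X, \Phi^{-1}Y)$ is continuous, bilinear, and vanishes on zero products in $\mathrm{M}_n(R)$, hence factors as $\tau_0(XY)$ for a continuous linear $\tau_0$; setting $\tau := \tau_0 \circ \Phi$ then yields a continuous linear functional with $\phi(A, B) = \tau(A \bullet B)$, as required by Definition \ref{defzpd}.

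The one genuine point to watch is continuity: the definition of a zpd Banach algebra demands that both $\phi$ and the witnessing $\tau$ be continuous, so the whole transport of structure hinges on $\Phi$ being a topological, not merely algebraic, isomorphism. That is where the hypotheses $\|P\|_\infty \le 1$ and the invertibility of $P$ do the real work, and it is the step I would verify most carefully; everything else is a formal pullback along $\Phi$ together with the two citations.
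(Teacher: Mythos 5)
Your argument is correct and is exactly the route the paper intends: the paper gives no written proof at all, merely asserting that the theorem follows from \cite[Theorem 4.1]{MR4215991} (the topological isomorphism of the $\ell_1$-Munn Banach algebra with an ordinary $\ell_1$-matrix algebra once $P$ is invertible and one index set is finite, which forces both to be finite of equal size) together with \cite[Theorem 5.14]{MR4311188} (zpd-ness of the resulting matrix Banach algebra) --- precisely the two ingredients you supply and glue together with the transport-of-structure step, correctly flagging that the isomorphism must be topological so that continuity of $\phi$ and $\tau$ is preserved. The only caveat is that the paper's own statement conflates $\mathbb{F}$ and $R$, and you resolve that ambiguity in the natural way.
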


\section*{Acknowledgements}
This research was partly supported by the National Natural Science Foundation of China (Grant No.
11871021).

\providecommand{\bysame}{\leavevmode\hbox to3em{\hrulefill}\thinspace}

\end{document}